\documentclass[11pt,reqno]{amsart}

\usepackage[margin=1.15in]{geometry}

\usepackage{amsmath,amssymb,amsthm,mathtools}
\usepackage{mathrsfs}
\usepackage{bm}

\usepackage{graphicx}
\usepackage{multirow}
\usepackage{placeins}
\usepackage{float}

\usepackage{enumitem}

\usepackage{xcolor}
\usepackage{tikz}
\usepackage[colorlinks=true,
  linkcolor=blue,
  citecolor=blue,
  urlcolor=blue]{hyperref}
\usepackage[capitalize,nameinlink]{cleveref}

\theoremstyle{plain}
\newtheorem{theorem}{Theorem}[section]
\newtheorem{proposition}[theorem]{Proposition}
\newtheorem{lemma}[theorem]{Lemma}

\newtheorem{conjecture}[theorem]{Conjecture}

\theoremstyle{definition}
\newtheorem{definition}[theorem]{Definition}

\theoremstyle{remark}
\newtheorem{remark}[theorem]{Remark}

\numberwithin{equation}{section}

\makeatletter

\let\@preprintnumber\@empty
\newcommand{\preprintnumber}[1]{%
  \gdef\@preprintnumber{#1}%
}

\def\@settitle{%
  \ifx\@preprintnumber\@empty
  \else
    \vspace*{-2.6cm}
    \begin{flushright}%
      \normalfont\normalsize
      \@preprintnumber
    \end{flushright}%
    \vspace{1.6cm}
  \fi
  \begin{center}%
    \baselineskip14\p@\relax
    \bfseries\Large
    \@title
  \end{center}%
}
\def\@setauthors{%
  \begingroup
  \trivlist
  \centering\normalsize \@topsep30\p@\relax
  \advance\@topsep by -\baselineskip
  \item\relax
  \andify\authors
  \def\\{\protect\linebreak}%
  \authors
  \endtrivlist
  \endgroup
}
\makeatother

\makeatletter
\def\section{\@startsection{section}{1}%
  \z@{.7\linespacing\@plus\linespacing}{.5\linespacing}%
  {\normalfont\bfseries\centering}}

\def\subsection{\@startsection{subsection}{2}%
  \z@{.5\linespacing\@plus.7\linespacing}{.3\linespacing}%
  {\normalfont\bfseries}}

\def\subsubsection{\@startsection{subsubsection}{3}%
  \z@{.5\linespacing\@plus.7\linespacing}{.3\linespacing}%
  {\normalfont\bfseries}}
\makeatother

\definecolor{lime}{HTML}{A6CE39}
\DeclareRobustCommand{\orcidicon}{%
    \begin{tikzpicture}
    \draw[lime, fill=lime] (0,0)
    circle [radius=0.16]
    node[white] {{\fontfamily{qag}\selectfont \tiny ID}};
    \draw[white, fill=white] (-0.0625,0.095)
    circle [radius=0.007];
    \end{tikzpicture}
    \hspace{-3mm}
}
\foreach \x in {A,...,Z}{%
  \expandafter\xdef\csname orcid\x\endcsname{%
    \noexpand\href{https://orcid.org/\csname orcidauthor\x\endcsname}
    {\noexpand\orcidicon}}%
}

\usepackage[numbers,sort&compress]{natbib}

\begin{document}


\title[Arithmetic Symmetry in Ideal PTE Solutions]{
Arithmetic Symmetry in Ideal Prouhet--Tarry--Escott Solutions
}

\preprintnumber{TU-1310}

\author[Y.-D. Tsai]{
Yu-Dai Tsai\hspace{-0.5mm}\orcidA{}
}

\author[J. Lee]{
Junseok Lee\hspace{-0.5mm}\orcidB{}
}

\author[F. Takahashi]{
Fuminobu Takahashi\hspace{-0.5mm}\orcidC{}
}


\begin{abstract}
\normalsize
Motivated in part by anomaly cancellation for integral charge spectra in
chiral gauge theory, we study the symmetric locus in the ideal degree-three
Prouhet--Tarry--Escott problem. A symmetric integer solution is one whose
entries are paired about a common center $c\in \frac12\mathbb Z$. This
symmetry reduces the problem to a sum-of-two-squares equation,
$x^2+y^2=u^2+v^2$, in integer variables, subject to the appropriate parity
conditions. Thus the problem is governed by representations as sums of two
squares.
For the full symmetric locus, let $N_{\mathrm{sym}}(H)$ denote the number of
nontrivial symmetric integer solutions of height at most $H$, counted with
unordered multiset conventions and summed over the admissible centers. Then
\begin{align*}
N_{\mathrm{sym}}(H)
=
\frac{4\log 2}{3\pi^2}H^3\log H+O(H^3).
\end{align*}
The logarithmic enhancement comes from the second moment of the
sum-of-two-squares representation function. In particular, the symmetric locus
is larger than one would expect from the naive $H^3$ degree-weighted
box-counting scale alone. This asymptotic identifies a large arithmetically
structured subfamily of the ideal degree-three solution space, and suggests
that paired anomaly-free integral charge spectra reflect a fundamental
number-theoretic structure.
\end{abstract}

\maketitle

\vspace{0.5cm}
\tableofcontents

\section{Introduction}

The Prouhet--Tarry--Escott problem~\cite{Wright:1959Prouhet,BorweinIngalls1994,Borwein2002,shuwen2025survey} asks for two multisets of integers whose power sums agree up to a prescribed degree. A further motivation for studying this problem comes from anomaly cancellation in chiral gauge theory. In gauge theories with integral charge assignments, anomaly-cancellation conditions often impose Diophantine constraints on the charges, including linear and cubic relations in abelian settings; see, for example,~\cite{Bouchiat:1972iq,Batra:2005rh,deGouvea:2015pea,Nakayama:2011dj,Nakayama:2018yvj,Lohitsiri:2019fuu,Costa:2019zzy,Costa:2020dph}. Number-theoretic structures closely related to the Prouhet--Tarry--Escott problem, including equalities among linear, quadratic, and cubic sums, appear in~\cite{Lee:2026djo}.
Following~\cite{Lee:2026djo}, we refer to the corresponding anomaly-free charge spectra as Lee--Takahashi--Tsai (LTT) particle spectra\footnote{Here, the ``spectra'' denotes the collection of charged particle states in the Lee--Takahashi--Tsai construction. In chiral gauge theories, anomaly cancellation generally requires the contributions of multiple charged particles to cancel among themselves, a condition especially relevant in models with millicharged particles~\cite{Lee:2026djo}.}. 
Thus the degree-three Prouhet--Tarry--Escott system arises naturally both as a Diophantine problem and as an arithmetic model for a class of anomaly-free charge spectra.

More precisely, for integers $k\geq 1$ and $n\geq 1$, one seeks multisets
\begin{align}
A=\{a_1,\ldots,a_n\}, \qquad B=\{b_1,\ldots,b_n\}
\end{align}
such that
\begin{align}
\sum_{i=1}^n a_i^\ell=\sum_{i=1}^n b_i^\ell
\qquad \text{for } \ell=1,\ldots,k.
\end{align}
The case $n=k+1$, known as the ideal case, is the minimal nontrivial setting and is correspondingly rigid. Previous work on the problem has emphasized existence, explicit constructions, computational searches, and classification of solutions~\cite{2023arXiv230411254C}. In this paper, we study instead the height distribution of ideal solutions and ask which arithmetic structures persist at scale.

We focus on ideal solutions of degree three, so that $n=4$. Thus we study pairs of four-element multisets $A$ and $B$ satisfying equality of the first, second, and third power sums. The height of a solution is
\begin{align}
\max_i\{|a_i|,|b_i|\},
\end{align}
and we investigate the distribution of solutions of height at most $H$ as $H\to\infty$.

We use two counting functions throughout the paper.  The letter $C$ is reserved for centered counts, where the common center has been subtracted and only the intrinsic centered configuration is counted.
The letter $N$ denotes the corresponding shifted count, obtained by summing centered configurations over all admissible integer or half-integer centers subject to the height bound.  
Thus $C_{\mathrm{sym}}$ counts centered symmetric configurations and $N_{\mathrm{sym}}$ counts their shifted symmetric realizations. We count shifted solutions separately because, in physical applications such as anomaly cancellation, translations and rescalings of a charge spectrum generally define physically distinct spectra, and therefore correspond to different model setups.

The main object of this work is the symmetric locus.  A symmetric integer
solution is one whose entries are paired about a common center
$c\in\frac12\mathbb Z$.  Thus there are two center classes:
\begin{align}
c\in\mathbb Z
\qquad\text{and}\qquad
c\in\mathbb Z+\frac12.
\end{align}
After subtracting $c$ and multiplying centered coordinates by $2$, both
classes have the form
\begin{align}
2(A-c)=\{\pm x,\pm y\},\qquad
2(B-c)=\{\pm u,\pm v\},
\end{align}
where $x,y,u,v\in\mathbb Z$ have the same parity.  The case
$c\in\mathbb Z$ corresponds to even doubled coordinates, while the case
$c\in\mathbb Z+\frac12$ corresponds to odd doubled coordinates.

For such configurations, the first and third power sums vanish after
centering, and the degree-three Prouhet--Tarry--Escott conditions reduce to
the single quadratic relation
\begin{align}
x^2+y^2=u^2+v^2.
\end{align}
Thus the symmetric locus is governed by representations of integers as sums
of two squares, together with a parity condition.  This reduction is the basic
structural observation of the paper.

Our first result gives a simple infinite family of nontrivial symmetric
solutions. For integers $p>r\geq 2$, the identity
\begin{align}
(pr-1)^2+(p+r)^2=(pr+1)^2+(p-r)^2
\end{align}
produces the symmetric Prouhet--Tarry--Escott solution
\begin{align}
A=\{\pm(pr-1),\pm(p+r)\}, \qquad
B=\{\pm(pr+1),\pm(p-r)\}.
\end{align}
Counting pairs $p>r\geq 2$ with $pr\leq H-1$ gives a lower bound of order
$H\log H$ for this explicitly constructed family. In particular, symmetric
solutions are not isolated examples, but arise from a simple multiplicative
mechanism.

The main counting results concern the full symmetric locus, including both
center classes.  Let $C_{\mathrm{sym}}(H)$ denote the number of nontrivial
centered symmetric configurations of centered height at most $H$, counted
with the unordered multiset conventions fixed below.  We prove
\begin{align}
C_{\mathrm{sym}}(H)
=
\frac{2\log 2}{\pi^2}H^2\log H+O(H^2).
\end{align}
The logarithmic factor reflects the second moment of the sum-of-two-squares
representation function, while the leading constant records both the geometry
of the height box and the two parity classes.  After summing over admissible
centers, we obtain
\begin{align}
N_{\mathrm{sym}}(H)
=
\frac{4\log 2}{3\pi^2}H^3\log H+O(H^3),
\end{align}
where distinct admissible centers are counted separately.  Thus the center
parameter contributes one additional power of $H$, while preserving the
logarithmic enhancement already present in the centered problem. We note that this logarithmic enhancement goes beyond the naive heuristic of degree-weighted counting.

This arithmetic picture has a natural physical counterpart. Under the degree-three Prouhet--Tarry--Escott correspondence, symmetric solutions correspond to LTT particle spectra that naturally organize into doublet-like pairs, with states sharing nearby masses due to the charge assignments~\cite{Lee:2026djo}. The arithmetic abundance of such solutions therefore suggests a possible organizing principle for anomaly-free integral charge spectra in chiral gauge theories.

The preceding estimates motivate a comparison between the symmetric locus and the full ideal degree-three solution space. General solutions are defined by three coupled polynomial constraints, whereas the symmetric locus reduces after centering to a single quadratic equation governed by sums of two squares. In Section~\ref{sec:conjecture}, we formulate a polynomial-scale density conjecture asserting that the full counting function should satisfy
\begin{align}
N(H)=H^{3+o(1)}.
\end{align}
Under this conjecture, the symmetric locus has the same height exponent as the complete ideal degree-three solution space, 
even though its limiting density need not be positive.

The paper is organized as follows. In Section~\ref{sec:preliminaries}, we fix notation and counting conventions and prove the reduction of the symmetric locus to sums of two squares. Section~\ref{sec:family} gives an explicit infinite family and an elementary $H\log H$ lower bound. Section~\ref{sec:counting} counts the centered symmetric locus.  The two
center classes $c\in\mathbb Z$ and
$c\in\mathbb Z+\frac12$ contribute equal leading terms, and together give
\begin{align}
C_{\mathrm{sym}}(H)
=
\frac{2\log 2}{\pi^2}H^2\log H+O(H^2).
\end{align}
Section~\ref{sec:law} sums over admissible centers and proves
\begin{align}
N_{\mathrm{sym}}(H)
=
\frac{4\log 2}{3\pi^2}H^3\log H+O(H^3).
\end{align}
Section~\ref{sec:conjecture} presents finite-height evidence, formulates the polynomial-scale density conjecture, and explains why the abundance of symmetric integer solutions is not predicted by the naive degree-weighted box-counting scale. Appendix~\ref{sec:vmvt-full-pte} relates the full ideal degree-three Prouhet--Tarry--Escott solution space to the Vinogradov system $J_{4,3}(X)$, explains the raw VMVT bound
\begin{align}
N(H)\ll_{\varepsilon} H^{4+\varepsilon},
\end{align}
and outlines the affine-reduced estimates that would be needed to compare the full solution space with the symmetric count. Appendix~\ref{app:finite-enumeration} describes the finite-height enumeration
used to support the conjectural discussion.
Appendix~\ref{app:finite-sqSum-enumeration} records a companion centered
enumeration under a finite-squared-sum, or radial, cutoff.  This alternative
count is not used in the proofs of the main height-box asymptotics, but it
provides a useful comparison illustrating how the leading constant changes
when the cutoff geometry is changed.

\section{Preliminaries and the Symmetric Locus}
\label{sec:preliminaries}

We begin by fixing notation and counting conventions.  Throughout, multisets
are counted with multiplicity.  Thus $A=B$ means equality as multisets:
after ignoring order, every integer occurs with the same multiplicity in
$A$ and $B$.

\subsection{Prouhet--Tarry--Escott Solutions}

Let $k\geq 1$ and $n\geq 1$.  Two multisets of integers
\begin{align}
A=\{a_1,\ldots,a_n\}, \qquad B=\{b_1,\ldots,b_n\}
\end{align}
form a Prouhet--Tarry--Escott solution of degree $k$ if
\begin{align}
\sum_{i=1}^n a_i^\ell=\sum_{i=1}^n b_i^\ell
\qquad
\text{for } \ell=1,\ldots,k .
\end{align}
We shall write this briefly as $A\equiv_k B$.  The solution is called
trivial if $A=B$ as multisets.

In this paper we consider the ideal degree-three case.  Thus $k=3$ and
$n=4$.  A solution is therefore a pair of four-element multisets
\begin{align}
A=\{a_1,a_2,a_3,a_4\}, \qquad
B=\{b_1,b_2,b_3,b_4\}
\end{align}
satisfying
\begin{align}
\sum_{i=1}^4 a_i=\sum_{i=1}^4 b_i,\qquad
\sum_{i=1}^4 a_i^2=\sum_{i=1}^4 b_i^2,\qquad
\sum_{i=1}^4 a_i^3=\sum_{i=1}^4 b_i^3.
\end{align}

\subsection{Height and Affine Transformations}

The height of a solution $(A,B)$ is defined by
\begin{align}
\operatorname{ht}(A,B)
=
\max\{|a|,\ |b|: a\in A,\ b\in B\}.
\end{align}
We count solutions satisfying $\operatorname{ht}(A,B)\le H$.

The Prouhet--Tarry--Escott equalities are preserved under affine transformations.
Indeed, if $A\equiv_k B$, then for any 
rational numbers $\xi,\eta$ with
$\xi\neq0$, the multisets
\begin{align}
\xi A+\eta
:=
\{\xi a_1+\eta,\ldots,\xi a_n+\eta\},
\qquad
\xi B+\eta
:=
\{\xi b_1+\eta,\ldots,\xi b_n+\eta\}
\end{align}
again satisfies the degree-$k$ Prouhet--Tarry--Escott equalities. When all the elements are integers, they give another integer solution. 
This follows by expanding
$(\xi t+\eta)^\ell$ as a polynomial in $t$ of degree at most
$\ell$.

In the present work, scaling is not quotiented out in the height counts unless
explicitly stated.  Translation, however, will be used to organize symmetric
solutions by their centers.

\subsection{Centers and Shifted Solutions}

For an ideal degree-three solution $(A,B)$, the equality of first power sums
implies that the two multisets have the same mean,
\begin{align}
\frac14\sum_{a\in A}a
=
\frac14\sum_{b\in B}b.
\end{align}
Subtracting this common mean translates any solution to a centered rational
solution.  This translation need not preserve integrality.

For symmetric integer solutions, the relevant center lies in
$\frac12\mathbb Z$.  Indeed, if two integers are paired by reflection about
a center $c$, then their sum is $2c$, which must be an integer.  Thus
$c\in \frac12\mathbb Z$.  Consequently, the symmetric locus naturally
decomposes into two center classes:
\begin{align}
\mathbb Z
\qquad\text{and}\qquad
\mathbb Z+\frac12.
\end{align}
Translating all entries of a symmetric solution by an integer shifts its
center by the same integer and preserves the center class.
Distinct admissible centers will be counted
separately in the shifted counting function introduced in
Section~\ref{sec:law}.

\subsection{The Symmetric Locus}

We now isolate the main structural subclass studied in this paper.

\begin{definition}
\label{def:symmetric}
An ideal degree-three Prouhet--Tarry--Escott solution $(A,B)$ is called
symmetric if there exists $c\in \frac12\mathbb Z$ such that both centered
multisets $A-c$ and $B-c$ are invariant under sign reversal.
Equivalently, a symmetric solution is one whose entries are paired by
reflection about a common center. 

The two possibilities $c\in\mathbb Z$ and
$c\in\mathbb Z+\frac12$ will be called the integer-centered and
half-integer-centered parity classes, respectively.
\end{definition}

\begin{remark}
The half-integer-centered class is present. For example,
\begin{align}
A=\{0,4,7,11\},\qquad B=\{1,2,9,10\}
\end{align}
is symmetric about $c=11/2$.  After subtracting $11/2$ and multiplying by
$2$, this becomes
\begin{align}
\{\pm 11,\pm 3\},\qquad \{\pm 9,\pm 7\},
\end{align}
and
\begin{align}
11^2+3^2=9^2+7^2.
\end{align}
\end{remark}

 The following reduction is the basic
structural observation used throughout the paper.

\begin{proposition}[Symmetric reduction]
\label{prop:symmetric-reduction}
Let $(A,B)$ be symmetric with center $c\in\frac12\mathbb Z$.  Then, after
subtracting $c$ and multiplying all centered coordinates by $2$, one may write
\begin{align}
2(A-c)=\{\pm x,\pm y\},\qquad
2(B-c)=\{\pm u,\pm v\},
\end{align}
where $x,y,u,v\in\mathbb Z$ have the same parity.  In these coordinates,
$(A,B)$ is an ideal degree-three Prouhet--Tarry--Escott solution if and only
if
\begin{align}
x^2+y^2=u^2+v^2.
\end{align}
\end{proposition}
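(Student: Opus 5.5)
The plan has two parts: first establish the normal form, then check the three power-sum conditions one at a time in centered coordinates.

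\emph{Normal form.} By Definition~\ref{def:symmetric}, the multiset $A-c$ is invariant under $t\mapsto -t$. So its four elements split into two pairs $\{t,-t\}$. If $0$ occurs, it occurs with even multiplicity, because the multiset has four elements and the nonzero elements come in pairs. Hence $A-c=\{\pm x/2,\pm y/2\}$ for some half-integers $x/2,y/2$, and similarly for $B-c$. Multiplying by $2$ gives $2(A-c)=\{\pm x,\pm y\}$ with $x=2a-2c$ for some $a\in A$. Since $a\in\mathbb Z$ and $2c\in\mathbb Z$, we get $x\in\mathbb Z$ and $x\equiv 2c \pmod 2$. The same congruence holds for $y,u,v$, so all four share the parity of $2c$: they are even if $c\in\mathbb Z$ and odd if $c\in\mathbb Z+\tfrac12$.

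\emph{Reduction of the power sums.} By the affine invariance noted in the subsection on height and affine transformations, $A\equiv_3 B$ holds if and only if $\xi A+\eta\equiv_3\xi B+\eta$ holds, for $\xi=2$ and $\eta=-2c$. The forward direction is the stated invariance. The converse follows by applying the inverse map with $\xi=1/2$ and $\eta=c$; the expansion argument works over $\mathbb Q$, so integrality of the transformed entries is not needed. It therefore suffices to compare the power sums of $X=\{x,-x,y,-y\}$ and $U=\{u,-u,v,-v\}$ for $\ell=1,2,3$.
\begin{itemize}
\item For odd $\ell$, both sums vanish identically, since each pair contributes $t^\ell+(-t)^\ell=0$.
\item For $\ell=2$, the sums are $2(x^2+y^2)$ and $2(u^2+v^2)$.
\end{itemize}
Thus $X\equiv_3 U$ holds if and only if $x^2+y^2=u^2+v^2$, which proves the equivalence.

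I do not expect a serious obstacle. The only point needing care is the normal-form step: writing $A-c$ as $\{\pm x/2,\pm y/2\}$ when the multiset contains $0$ or repeated entries. This is handled by the multiplicity argument above, which also allows $x=0$ or $x=y$.
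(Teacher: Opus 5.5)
Your proof is correct and follows the same route as the paper. Both arguments use invariance of the power-sum equalities under translation by $c$ and doubling, note that the odd power sums of $\{\pm x,\pm y\}$ and $\{\pm u,\pm v\}$ vanish so only $2(x^2+y^2)=2(u^2+v^2)$ remains, and read off the common parity from integrality of the entries; your added care with the normal form (even multiplicity of $0$) and with the converse via the inverse affine map just makes explicit what the paper leaves implicit.
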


\begin{proof}
Translation by $c$ preserves equality of power sums, and multiplying all
centered coordinates by $2$ multiplies the $\ell$-th power sums by
$2^\ell$.  Hence these operations do not affect the
Prouhet--Tarry--Escott equalities.

The centered doubled multisets have the form
\begin{align}
\{\pm x,\pm y\},\qquad \{\pm u,\pm v\}.
\end{align}
Their first and third power sums vanish identically.  Thus only the quadratic
condition remains, and it is
\begin{align}
2x^2+2y^2=2u^2+2v^2,
\end{align}
equivalently
\begin{align}
x^2+y^2=u^2+v^2.
\end{align}

The parity statement follows from integrality of the original entries.  If
$c\in\mathbb Z$, then the centered coordinates are integers, so the doubled
coordinates are even.  If $c\in\mathbb Z+\frac12$, then the centered
coordinates are half-integers with odd doubles.  This proves the claim.
\end{proof}

Thus the full symmetric locus is governed by the same quadratic equation in
both parity classes.  To avoid overcounting, we choose representatives with
\begin{align}
0\leq x\leq y,\qquad 0\leq u\leq v,
\end{align}
and exclude the trivial case
\begin{align}
\{x,y\}=\{u,v\}.
\end{align}

For counting purposes, the centered height of a symmetric solution with center
$c$ is
\begin{align}
\operatorname{ht}_{\mathrm{cen}}(A,B;c)
=
\max\{|a-c|,\ |b-c|: a\in A,\ b\in B\}.
\end{align}
Equivalently, it is the ordinary height after subtracting the center.

For the integer-centered class, the doubled centered coordinates are even, so
it is convenient to divide them by $2$. Thus centered configurations of
centered height at most $H_c$ are counted by ordinary nonnegative
representations
\begin{align}
x^2+y^2=u^2+v^2,\qquad 0\leq x,y,u,v\leq H_{c}.
\end{align}
For the half-integer-centered class, the doubled centered coordinates are odd. Keeping these doubled coordinates, centered configurations of centered height at most $H_{c}$ are counted by odd representations
\begin{align}
x^2+y^2=u^2+v^2,\qquad
1\leq x,y,u,v\leq 2H_{c}
\end{align}
with $x,y,u,v$ odd.
This convention will be used in the centered counts of
Section~\ref{sec:counting} and in the summation over admissible centers in
Section~\ref{sec:law}.

\section{An Explicit Infinite Family}
\label{sec:family}

Before counting the full symmetric locus, we record a simple infinite family
of nontrivial symmetric solutions.  This construction plays two roles.  First,
it gives an elementary source of nontrivial ideal degree-three
Prouhet--Tarry--Escott solutions.  Second, it exhibits the multiplicative
structure that will reappear in the counting arguments below.  This family
lies in the integer-centered parity class of the full symmetric locus.

\begin{proposition}
Let $p>r\geq 2$ be integers, and define
\begin{align}
A=\{\pm(pr-1),\pm(p+r)\}, \qquad
B=\{\pm(pr+1),\pm(p-r)\}.
\end{align}
Then $A$ and $B$ form a nontrivial degree-three
Prouhet--Tarry--Escott solution.
\end{proposition}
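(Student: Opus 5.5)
The plan is to apply Proposition~\ref{prop:symmetric-reduction} with center $c=0$ and then check nontriviality by hand. The multisets $A$ and $B$ are visibly invariant under sign reversal about $c=0\in\mathbb Z$, so $(A,B)$ lies in the integer-centered class. With $c=0$ the doubled coordinates are $2(pr-1),2(p+r),2(pr+1),2(p-r)$, which are all even, so the parity hypothesis of Proposition~\ref{prop:symmetric-reduction} holds. By that proposition, the degree-three equalities $A\equiv_3 B$ reduce to
\begin{align}
(pr-1)^2+(p+r)^2=(pr+1)^2+(p-r)^2,
\end{align}
after cancelling the common factor $4$ from the doubling. One could equally note directly that the first and third power sums of $\{\pm x,\pm y\}$ vanish.

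To verify the identity, expand both sides. Each side equals $p^2r^2+p^2+r^2+1$: on the left the cross terms $-2pr$ and $+2pr$ cancel, and on the right $+2pr$ and $-2pr$ cancel. This is the Brahmagupta-type identity $(p^2+1)(r^2+1)$ written in two ways.

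The remaining step, and the only one with any content, is nontriviality: we must show $A\neq B$ as multisets. It suffices to show that $pr+1\notin A$ in absolute value, i.e.\ $pr+1\notin\{pr-1,\,p+r\}$. Clearly $pr+1\neq pr-1$. Also
\begin{align}
pr+1-(p+r)=(p-1)(r-1)>0
\end{align}
since $p>r\ge 2$. So the largest absolute value $pr+1$ in $B$ does not occur in $A$, and the multisets differ. The hypotheses are exactly what make this work. We need $r\ge2$, because if $r=1$ then $(p-1)(r-1)=0$ and $pr+1=p+r$, giving the trivial solution. We need $p>r$ so that $p-r>0$ and the representation is genuinely nondegenerate, although nontriviality only uses the inequality above. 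All entries are integers, so $(A,B)$ is an integer solution.
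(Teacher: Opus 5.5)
Your proof is correct and follows the paper's argument: the symmetric reduction with center $0$ gives the identity $(pr-1)^2+(p+r)^2=(pr+1)^2+(p-r)^2$, and nontriviality follows because $pr+1$ exceeds both $pr-1$ and $p+r$, using $pr+1-(p+r)=(p-1)(r-1)>0$. The only cosmetic difference is in how the identity is checked: the paper obtains it from $(\alpha-\beta)^2+(\gamma+\delta)^2=(\alpha+\beta)^2+(\gamma-\delta)^2$ whenever $\alpha\beta=\gamma\delta$, taking $\alpha=pr$, $\beta=1$, $\gamma=p$, $\delta=r$, whereas you expand both sides directly.
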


\begin{proof}
We use the standard two-square identity
\begin{align}
(\alpha-\beta)^2+(\gamma+\delta)^2
=
(\alpha+\beta)^2+(\gamma-\delta)^2
\end{align}
whenever
\begin{align}
\alpha\beta=\gamma\delta.
\end{align}
Indeed, subtracting the right-hand side from the left-hand side gives
\begin{align}
-4\alpha\beta+4\gamma\delta,
\end{align}
which vanishes under this hypothesis.

Now take
\begin{align}
\alpha=pr,\qquad \beta=1,\qquad \gamma=p,\qquad \delta=r.
\end{align}
Then $\alpha\beta=\gamma\delta=pr$, and hence
\begin{align}
(pr-1)^2+(p+r)^2=(pr+1)^2+(p-r)^2.
\end{align}
By the symmetric reduction of Section~\ref{sec:preliminaries}, the multisets
\begin{align}
A=\{\pm(pr-1),\pm(p+r)\}, \qquad
B=\{\pm(pr+1),\pm(p-r)\}
\end{align}
therefore satisfy the degree-three Prouhet--Tarry--Escott conditions.

It remains only to note that the solution is nontrivial.  Since $p>r\geq 2$,
we have
\begin{align}
pr+1>pr-1
\end{align}
and
\begin{align}
pr+1>p+r,
\end{align}
the latter being equivalent to
\begin{align}
(p-1)(r-1)>0.
\end{align}
Thus the element $pr+1$ occurs in $B$ but not in $A$.  Hence
$A\neq B$ as multisets.
\end{proof}

This construction already gives many solutions of bounded height.  Let $C_{\mathrm{constr}}(H)$ denote the number of solutions arising from the above construction with height at most $H$, counted by parameter pairs
$(p,r)$ satisfying $p>r\geq 2$.

\begin{proposition}
As $H\to\infty$,
\begin{align}
C_{\mathrm{constr}}(H)\gg H\log H.
\end{align}
\end{proposition}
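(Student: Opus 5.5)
The plan is to determine the height of the solution attached to a parameter pair $(p,r)$, and then count the lattice points under a hyperbola. For $p>r\geq 2$ the entries of $A$ and $B$ in absolute value are $pr-1$, $p+r$, $pr+1$, $p-r$. The largest of these is $pr+1$, using the inequality $pr+1>p+r$ already established in the nontriviality argument. Hence the height is exactly $pr+1$, and the construction has height at most $H$ precisely when $pr\leq H-1$. Therefore
\begin{align}
C_{\mathrm{constr}}(H)=\#\{(p,r)\in\mathbb Z^2:\ p>r\geq 2,\ pr\leq H-1\}.
\end{align}

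To bound this from below, I would set $M=H-1$ and sum over $r$. For each $r$ with $2\leq r<\sqrt{M}$, the admissible $p$ satisfy $r<p\leq M/r$, so there are $\lfloor M/r\rfloor-r\geq M/r-r-1$ of them. Summing over $2\leq r\leq R:=\lfloor\sqrt{M}\rfloor-1$ gives
\begin{align}
C_{\mathrm{constr}}(H)\geq \sum_{2\leq r\leq R}\Bigl(\frac{M}{r}-r-1\Bigr)
= M\bigl(\log\sqrt{M}+O(1)\bigr)-O(M).
\end{align}
Here I use $\sum_{r\leq R}1/r=\log R+O(1)$, and $\sum_{r\leq R}(r+1)=O(R^2)=O(M)$. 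The result is $\tfrac12 M\log M+O(M)$, which is $\gg H\log H$ for large $H$.

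There is one point to check: the count is by parameter pairs, as stated in the definition of $C_{\mathrm{constr}}$. So I do not need to worry about distinct pairs giving the same multiset solution, and no injectivity argument is required. The only mildly delicate step is handling the lower-order terms $r+1$ and the floor functions, which the restriction $r\leq R$ absorbs into $O(H)$. I do not expect a genuine obstacle. One could sharpen the estimate to an asymptotic $\tfrac12 H\log H+O(H)$ via the Dirichlet hyperbola method, but the stated lower bound needs only the elementary estimate above.
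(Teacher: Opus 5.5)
Your proposal is correct and follows essentially the same argument as the paper: show that each pair $p>r\geq 2$ with $pr\leq H-1$ gives height at most $H$, then sum $\lfloor (H-1)/r\rfloor - r \geq (H-1)/r - r - 1$ over $2\leq r\leq\sqrt{H-1}$ to obtain $\gg H\log H$. The differences are cosmetic: you note that the height is exactly $pr+1$, which gives equality rather than an inequality, and you keep track of the constant $\tfrac12$.
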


\begin{proof}
For each pair of integers $p>r\geq 2$, the construction gives a nontrivial
symmetric solution whose entries are
\begin{align}
\pm(pr-1),\qquad \pm(p+r),\qquad \pm(pr+1),\qquad \pm(p-r).
\end{align}
If
\begin{align}
pr\leq H-1,
\end{align}
then
\begin{align}
|pr\pm 1|\leq H.
\end{align}
Moreover, since $p,r\geq 1$, we have
\begin{align}
p+r\leq pr+1\leq H,
\end{align}
because $p+r\leq pr+1$ is equivalent to
\begin{align}
(p-1)(r-1)\geq 0.
\end{align}
Finally,
\begin{align}
|p-r|\leq p+r\leq H.
\end{align}
Thus every pair $p>r\geq 2$ with $pr\leq H-1$ produces a constructed
solution of height at most $H$.

It follows that
\begin{align}
C_{\mathrm{constr}}(H)
\geq
\#\{(p,r)\in\mathbb Z^2:p>r\geq 2,\ pr\leq H-1\}.
\end{align}
Put $X=H-1$.  Then
\begin{align}
\#\{(p,r):p>r\geq 2,\ pr\leq X\}
=
\sum_{2\leq r\leq \sqrt X}
\left(\left\lfloor \frac Xr\right\rfloor-r\right).
\end{align}
The right-hand side is
\begin{align}
\geq
\sum_{2\leq r\leq \sqrt X}
\left(\frac Xr -r -1\right)
=
X\sum_{2\leq r\leq \sqrt X}\frac1r
-
\sum_{2\leq r\leq \sqrt X} r
-
\sum_{2\leq r\leq \sqrt X} 1
\gg
X\log X.
\end{align}
Therefore
\begin{align}
C_{\mathrm{constr}}(H)\gg H\log H.
\end{align}
\end{proof}

The proposition shows that symmetric solutions are not isolated examples.
They occur in a family parametrized by restricted integer factorizations, and
their abundance is already visible through the elementary hyperbola count
\begin{align}
p>r\geq 2,\qquad pr\leq H.
\end{align}
In the next sections we move beyond this explicit subfamily and count all
centered symmetric configurations.  The resulting count is larger, reflecting
the full arithmetic of representations of integers as sums of two squares.

\section{Counting the Centered Symmetric Locus}
\label{sec:counting}

We now count the centered symmetric locus.  By
Proposition~\ref{prop:symmetric-reduction}, both center classes are governed
by the equation
\begin{align}
x^2+y^2=u^2+v^2,
\end{align}
with a parity condition.  
Throughout Section~\ref{sec:counting}, we write $H$ for the centered-height cutoff $H_c$.
The integer-centered class corresponds, after
dividing the even doubled coordinates by $2$, to ordinary nonnegative
representations in the box $0\leq x,y,u,v\leq H$.  The
half-integer-centered class corresponds to odd doubled coordinates in the box
$1\leq x,y,u,v\leq 2H$.

We first count the integer-centered class with an explicit leading constant,
then obtain the half-integer-centered class as a parity-restricted companion
count.  The two classes have the same leading asymptotic.

\subsection{The Integer-centered Class}

Throughout this subsection we use the ordering convention
\begin{align}
0\leq x\leq y,\qquad 0\leq u\leq v,
\end{align}
and exclude the trivial case
\begin{align}
\{x,y\}=\{u,v\}.
\end{align}
Thus a nontrivial centered symmetric solution in the integer-centered class is
determined by two distinct unordered nonnegative pairs $\{x,y\}$ and
$\{u,v\}$ satisfying
\begin{align}
x^2+y^2=u^2+v^2.
\end{align}

For $H\geq 1$ and $n\geq 0$, define
\begin{align}
s_H(n)
=
\#\bigl\{\{x,y\}:0\leq x\leq y\leq H,\ x^2+y^2=n\bigr\}.
\end{align}
Thus $s_H(n)$ counts unordered nonnegative representations of $n$ as a
sum of two squares inside the box $0\leq x\leq y\leq H$. In particular,
$s_H(0)=1$.

Let $C_{\mathbb Z}(H)$ denote the number of nontrivial centered symmetric
solutions in the integer-centered class, with centered height at most $H$, counted with unordered pairs of sides; that is, $(A,B)$ and $(B,A)$ are identified.
Then
\begin{align}
C_{\mathbb Z}(H)
=
\frac12
\sum_{n\leq 2H^2}
s_H(n)\bigl(s_H(n)-1\bigr).
\end{align}
The factor $1/2$ accounts for interchanging the two representations assigned
to $A$ and $B$.

\subsection{The Second Moment Input for Sums of Two Squares}
\label{subsec:r2-second-moment}

For $n\geq 1$, let
\begin{align}
r_2(n)=\#\{(a,b)\in\mathbb Z^2:a^2+b^2=n\}
\end{align}
denote the usual sum-of-two-squares representation function, where signs and
order are counted.  Thus $r_2(n)$ counts ordered signed representations,
whereas $s_H(n)$ counts unordered nonnegative representations inside the box
$0\leq x\leq y\leq H$.
For background on the classical theory of
representations as sums of two squares, see, for example, \cite{hardy75}.

We use the standard second-moment estimate
\begin{align}
\sum_{n\leq X}r_2(n)^2\asymp X\log X.
\end{align}
More precisely, Borwein and Choi~\cite{BorweinChoi2003} prove the asymptotic
expansion
\begin{align}
\sum_{n\leq X}r_2(n)^2
=
4X\log X+4\alpha_{\mathrm{BC}} X+O(X^{2/3}),
\end{align}
where
\begin{align}
\alpha_{\mathrm{BC}}
=
2\gamma+\frac{8}{\pi}L'_{-4}(1)
-\frac{12}{\pi^2}\zeta'(2)
+\frac13\log 2-1.
\end{align}
This second moment is the source of the logarithmic enhancement in the
symmetric count.

\begin{lemma}
\label{lem:sH-second-moment}
As $H\to\infty$,
\begin{align}
\sum_{n\leq 2H^2}s_H(n)^2\asymp H^2\log H.
\end{align}
\end{lemma}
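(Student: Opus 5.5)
We prove the two bounds separately by comparing $s_H(n)$ with $r_2(n)$ and then invoking the Borwein--Choi second-moment estimate quoted above.

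\emph{Upper bound.} For $n\ge 1$, every unordered pair $\{x,y\}$ with $0\le x\le y$ and $x^2+y^2=n$ yields at least one signed ordered representation counted by $r_2(n)$. Hence $s_H(n)\le r_2(n)$ for every $H$. The term $n=0$ contributes only $s_H(0)^2=1$. Therefore
\begin{align}
\sum_{n\le 2H^2}s_H(n)^2\le 1+\sum_{n\le 2H^2}r_2(n)^2=8H^2\log H+O(H^2),
\end{align}
by the Borwein--Choi expansion with $X=2H^2$, since $\log(2H^2)=2\log H+O(1)$.

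\emph{Lower bound.} We restrict to $n\le H^2$. For such $n$, any representation $x^2+y^2=n$ automatically satisfies $|x|,|y|\le \sqrt n\le H$, so the box constraint is vacuous and $s_H(n)=s(n)$ is the full count of unordered nonnegative representations. A given unordered nonnegative pair $\{x,y\}$ accounts for at most $8$ signed ordered representations. Hence $r_2(n)\le 8s_H(n)$, so $s_H(n)^2\ge r_2(n)^2/64$. Applying Borwein--Choi with $X=H^2$ gives
\begin{align}
\sum_{n\le 2H^2}s_H(n)^2\ge \frac1{64}\sum_{n\le H^2}r_2(n)^2\gg H^2\log H.
\end{align}

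Both bounds together give the claimed order $H^2\log H$. The argument is elementary and I expect no real obstacle. The only care needed is in the lower bound, where $n$ must be restricted to $n\le H^2$ so that the height box does not truncate any representations. The precise constant, which requires handling the truncation for $H^2<n\le 2H^2$ with a hyperbola-type or lattice-point count, is not needed for the $\asymp$ statement.
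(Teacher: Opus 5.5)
Your proof is correct and is essentially the paper's argument: $s_H(n)\le r_2(n)$ gives the upper bound, and restricting to $n\le H^2$, where no representation is truncated by the box, gives $r_2(n)\le 8s_H(n)$ and hence the lower bound. One arithmetic slip: Borwein--Choi with $X=2H^2$ gives $\sum_{n\le 2H^2}r_2(n)^2=16H^2\log H+O(H^2)$, not $8H^2\log H+O(H^2)$; this does not affect the $\asymp$ conclusion.
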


\begin{proof}
We first prove the upper bound.  Every unordered nonnegative representation
counted by $s_H(n)$ gives at least one ordered signed representation counted
by $r_2(n)$.
Hence
\begin{align}
s_H(n)\leq r_2(n).
\end{align}
Therefore, by the second-moment estimate for $r_2$,
\begin{align}
\sum_{n\leq 2H^2}s_H(n)^2
\leq
\sum_{n\leq 2H^2}r_2(n)^2
\ll H^2\log H.
\end{align}

For the lower bound, we restrict to $n\leq H^2$.  Then every representation
$n=a^2+b^2$ satisfies $|a|,|b|\leq H$, so all representations lie inside
the height box.  Apart from the boundary cases $a=0$, $b=0$, and
$|a|=|b|$, each unordered nonnegative representation
$\{x,y\}$ with $0<x<y\leq H$ contributes exactly eight ordered signed
representations:
\begin{align}
(\pm x,\pm y),\qquad (\pm y,\pm x).
\end{align}
The boundary cases contribute only $O(1)$ ordered signed representations
for each $n$.  Consequently, for $n\leq H^2$,
\begin{align}
r_2(n)\leq 8s_H(n).
\end{align}
It follows that
\begin{align}
r_2(n)^2 \leq 64 s_H(n)^2.
\end{align}
Summing over $n\leq H^2$, we obtain
\begin{align}
\sum_{n\leq H^2}r_2(n)^2
\leq
64 \sum_{n\leq H^2}s_H(n)^2.
\end{align}
Since
\begin{align}
\sum_{n\leq H^2}r_2(n)^2\asymp H^2\log H,
\end{align}
this gives
\begin{align}
\sum_{n\leq H^2}s_H(n)^2\gg H^2\log H.
\end{align}
Finally,
\begin{align}
\sum_{n\leq 2H^2}s_H(n)^2
\geq
\sum_{n\leq H^2}s_H(n)^2,
\end{align}
so the desired lower bound follows.  Combining the upper and lower bounds
proves the lemma.
\end{proof}

\begin{theorem}
\label{thm:centered-integer-class-order}
The integer-centered class satisfies
\begin{align}
C_{\mathbb Z}(H)\asymp H^2\log H.
\end{align}
\end{theorem}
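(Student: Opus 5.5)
We will deduce the estimate directly from the identity
\begin{align}
C_{\mathbb Z}(H)=\frac12\sum_{n\leq 2H^2}s_H(n)\bigl(s_H(n)-1\bigr)
=\frac12\sum_{n\leq 2H^2}s_H(n)^2-\frac12\sum_{n\leq 2H^2}s_H(n),
\end{align}
combined with Lemma~\ref{lem:sH-second-moment}. The plan is therefore to control the first moment $\sum_{n}s_H(n)$ and show that it is of strictly smaller order than $H^2\log H$.

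\textbf{First moment.} The first moment is elementary. Each unordered pair $\{x,y\}$ with $0\leq x\leq y\leq H$ contributes to exactly one $n=x^2+y^2\leq 2H^2$. Hence
\begin{align}
\sum_{n\leq 2H^2}s_H(n)=\#\{(x,y):0\leq x\leq y\leq H\}=\frac{(H+1)(H+2)}{2}\ll H^2.
\end{align}

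\textbf{Upper bound.} Since $s_H(n)(s_H(n)-1)\leq s_H(n)^2$, Lemma~\ref{lem:sH-second-moment} gives
\begin{align}
C_{\mathbb Z}(H)\leq \frac12\sum_{n\leq 2H^2}s_H(n)^2\ll H^2\log H.
\end{align}

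\textbf{Lower bound.} Here we subtract the first moment. By the lower bound in Lemma~\ref{lem:sH-second-moment}, there is an absolute constant $c_0>0$ with $\sum_{n\leq 2H^2}s_H(n)^2\geq c_0H^2\log H$ for all large $H$. Therefore
\begin{align}
C_{\mathbb Z}(H)\geq \frac{c_0}{2}H^2\log H-O(H^2)\gg H^2\log H,
\end{align}
because the $O(H^2)$ term is eventually absorbed once $\log H$ is large. For small $H$, the statement is only asymptotic, so no further care is needed.

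\textbf{Where the difficulty lies.} The only genuine point is ensuring that the diagonal (trivial) contribution $\sum s_H(n)$ is negligible against the second moment. The count of lattice pairs above makes this immediate, so no step is a real obstacle. All the arithmetic depth sits in the Borwein--Choi second-moment input already packaged into Lemma~\ref{lem:sH-second-moment}.

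One should also check that $n=0$ causes no issue. We have $s_H(0)=1$, so it contributes $0$ to $C_{\mathbb Z}(H)$, and the identity above holds as written. Upgrading $\asymp$ to the explicit constant would instead require a box-restricted version of the second moment, which is not needed here.
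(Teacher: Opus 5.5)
Your proposal is correct and follows the paper's proof: write $C_{\mathbb Z}(H)$ as half the second moment of $s_H$ minus half the first moment, apply Lemma~\ref{lem:sH-second-moment}, and note that the first moment equals the lattice count $(H+1)(H+2)/2=O(H^2)$, which is $o(H^2\log H)$. Your separate treatment of the upper bound via $s_H(n)(s_H(n)-1)\le s_H(n)^2$ and your check of the $n=0$ term are only minor expository additions to the same argument.
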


\begin{proof}
By definition,
\begin{align}
C_{\mathbb Z}(H)
=
\frac12
\sum_{n\leq 2H^2}
s_H(n)\bigl(s_H(n)-1\bigr).
\end{align}
Equivalently,
\begin{align}
C_{\mathbb Z}(H)
=
\frac12
\sum_{n\leq 2H^2}s_H(n)^2
-
\frac12
\sum_{n\leq 2H^2}s_H(n).
\end{align}
By Lemma~\ref{lem:sH-second-moment},
\begin{align}
\sum_{n\leq 2H^2}s_H(n)^2\asymp H^2\log H.
\end{align}
On the other hand,
\begin{align}
\sum_{n\leq 2H^2}s_H(n)
=
\#\{(x,y)\in\mathbb Z^2:0\leq x\leq y\leq H\}
=
\frac{(H+1)(H+2)}2
=
O(H^2).
\end{align}
Since $H^2=o(H^2\log H)$, the linear term is negligible compared with the
second moment.  Hence
\begin{align}
C_{\mathbb Z}(H)\asymp H^2\log H.
\end{align}
\end{proof}

The logarithmic factor reflects the second moment of the
sum-of-two-squares representation function.  Thus the integer-centered
symmetric class is already substantially larger than the explicit family
constructed in Section~\ref{sec:family}.  We next refine this order of
magnitude to an asymptotic formula with an explicit leading constant.

\subsection{A Box Asymptotic with Leading Constant}
\label{subsec:box-asymptotic}

The point is that the height condition used here is box-shaped rather than
radial: we count quadruples satisfying
\begin{align}
|x|,|y|,|u|,|v|\leq H,
\end{align}
rather than representations with
\begin{align}
x^2+y^2\leq H^2.
\end{align}
Accordingly, the leading constant is most naturally obtained by counting
integral points on the split quadric
\begin{align}
x^2+y^2=u^2+v^2
\end{align}
inside the box $[-H,H]^4$.

Let
\begin{align}
T(H)
=
\#\left\{
(x,y,u,v)\in\mathbb Z^4:
\begin{array}{c}
|x|,|y|,|u|,|v|\leq H,\\[1mm]
x^2+y^2=u^2+v^2
\end{array}
\right\}.
\end{align}
Thus $T(H)$ counts signed ordered representations on both sides.

\begin{proposition}
\label{prop:box-quadric-count}
As $H\to\infty$,
\begin{align}
T(H)
=
\frac{128\log 2}{\pi^2}H^2\log H
+
O(H^2).
\end{align}
\end{proposition}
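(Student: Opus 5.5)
Our plan is to parametrize the split quadric by factoring it, rewriting $x^2+y^2=u^2+v^2$ as
\begin{align}
(x-u)(x+u)=(v-y)(v+y).
\end{align}
The linear change of variables $a=x-u$, $b=x+u$, $c=v-y$, $d=v+y$ maps $\mathbb Z^4$ bijectively onto the sublattice of quadruples with $a\equiv b$ and $c\equiv d \pmod 2$. Under it, $T(H)$ becomes the number of such quadruples with $ab=cd$ and $|a\pm b|\le 2H$, $|c\pm d|\le 2H$, i.e.\ $|a|+|b|\le 2H$ and $|c|+|d|\le 2H$. So $T(H)$ counts solutions of $ab=cd$ in a product of two diamonds $D_{2H}\times D_{2H}$, subject to a congruence condition.

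We first dispose of the degenerate part. Solutions with $ab=cd=0$ contribute $O(H^2)$: either one of $a,b$ and one of $c,d$ vanishes, leaving two free variables, which gives $O(H^2)$ points. For $ab=cd\neq 0$ we use the standard parametrization
\begin{align}
a=gm,\quad b=hn,\quad c=gn,\quad d=hm,\qquad \gcd(m,n)=1.
\end{align}
Equivalently, and more simply, we count by the common value $N=ab=cd$:
\begin{align}
T(H)=\sum_{N\neq 0}\rho_H(N)^2+O(H^2),
\end{align}
where $\rho_H(N)$ is the number of factorizations $N=ab$ with $(a,b)\in D_{2H}$ and $a\equiv b\pmod 2$. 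This is exactly the divisor-function analogue of Lemma~\ref{lem:sH-second-moment}. We expect the main term to come from the region where one factor is much smaller than the other.

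To extract the constant, we fix the pair $(a,c)$ with $\gcd$ structure and count $(b,d)$. Writing $a=g\alpha$ and $c=g\gamma$ with $\gcd(\alpha,\gamma)=1$, the condition $ab=cd$ forces $b=\gamma t$ and $d=\alpha t$ for $t\in\mathbb Z$. The box constraints become $|t|\,|\gamma|\le 2H-|a|$ and $|t|\,|\alpha|\le 2H-|c|$. Summing over $t$ gives roughly
\begin{align}
2\min\!\Bigl(\frac{2H-|a|}{|\gamma|},\frac{2H-|c|}{|\alpha|}\Bigr)+O(1).
\end{align}
This leaves a sum over $g,\alpha,\gamma$, i.e.\ over primitive directions $(\alpha,\gamma)$, of a piecewise-linear function of $g$. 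Summing over $g\le 2H/\max(|\alpha|,|\gamma|)$ yields a term of size about $H^2/(|\alpha||\gamma|)$-type weights. Summing this over coprime $(\alpha,\gamma)$ with $|\alpha|,|\gamma|\lesssim H$ produces the $\log H$ through a harmonic sum, with density factor $6/\pi^2$ from coprimality.

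The parity condition then has to be inserted. It is a local condition at $2$, and we expect it to simply change the $2$-adic factor: the ratio of the local density at $2$ to the unrestricted one. Combined with the geometric integral of the min function over the diamond, which we expect to produce the $\log 2$ factor, this should give $128\log 2/\pi^2$.

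The main obstacle is the bookkeeping of the constant: computing the integral of the min expression exactly, tracking the sign choices (a factor of $2^k$), and handling the parity weight correctly. We would organize this by first proving the unrestricted count
\begin{align}
\#\{ab=cd,\ |a|+|b|\le X,\ |c|+|d|\le X\}=\kappa X^2\log X+O(X^2)
\end{align}
using the hyperbola-method computation above, with errors $O(1)$ per $(g,\alpha,\gamma)$ totalling $O(X^2)$. We would then apply Möbius inversion over powers of $2$, or split into parity classes of $(\alpha,\gamma,g)$, to impose the congruence, and finally set $X=2H$. Throughout, the error terms from boundary rounding and from the tail $|\alpha|,|\gamma|$ near $H$ contribute only $O(H^2)$, since there the harmonic sums are bounded.
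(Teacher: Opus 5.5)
\textbf{There is a genuine gap: the constant is never derived.}

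Your change of variables is correct. The equation becomes $ab=cd$ on $\{|a|+|b|\le 2H\}\times\{|c|+|d|\le 2H\}$ with $a\equiv b$ and $c\equiv d \pmod 2$. Fixing the primitive pair $(\alpha,\gamma)$ and counting $(g,t)$ is the same route the paper takes: its $(m,n)$ and $(g,h)$ are your $(\alpha,\gamma)$ and $(g,t)$, with the identical polygon $|g\alpha|+|t\gamma|\le 2H$, $|g\gamma|+|t\alpha|\le 2H$.

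However, the proposition is precisely the constant $\frac{128\log 2}{\pi^2}$, and you only assert that things "should give" it. The one quantitative claim you make on the way is wrong. The polygon does not contribute $H^2/(|\alpha||\gamma|)$. Put $m=\max(|\alpha|,|\gamma|)$ and $n=\min(|\alpha|,|\gamma|)$. The two edges meet at $|g|=|t|=2H/(m+n)$, and the area is exactly $16H^2/(m(m+n))\asymp H^2/m^2$.

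Weights $1/(|\alpha||\gamma|)$, summed over coprime pairs up to $H$, give $\asymp(\log H)^2$. That would mean $T(H)\asymp H^2(\log H)^2$, contradicting $T(H)\le\sum_{n\le 2H^2}r_2(n)^2\ll H^2\log H$. With the correct weight you get a single harmonic sum over $m$. The $\log 2$ then comes from $\sum_{n<m}\frac{1}{m+n}=\log 2+O(1/m)$, that is, from summing the area formula over the smaller coordinate of the direction.

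\textbf{The parity condition is not a generic $2$-adic factor.} The admissible $(g,t)$ form a sublattice whose density depends on the direction:
\begin{itemize}
\item if $\alpha,\gamma$ have opposite parity, you need $g,t$ both even (density $\frac14$);
\item if $\alpha,\gamma$ are both odd, you need $g\equiv t \pmod 2$ (density $\frac12$).
\end{itemize}
Among coprime pairs, the classes (odd, even), (even, odd) and (odd, odd) carry equal weight. So the restriction multiplies the unrestricted constant by $\frac23\cdot\frac14+\frac13\cdot\frac12=\frac13$, not by the index-$4$ guess $\frac14$. Concretely, the unrestricted count of $ab=cd$ in the two diamonds of size $X$ is $\frac{96\log 2}{\pi^2}X^2\log X+O(X^2)$. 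One third of this at $X=2H$ is $\frac{128\log 2}{\pi^2}H^2\log H$.

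\textbf{What you need to close the gap.} These are exactly the paper's two computations.
\begin{enumerate}
\item Combine the four classes $(m,n),(m,-n),(n,m),(n,-m)$ to get
\[
T(H)=64H^2\sum_{\substack{m>n\ge 1\\ m+n\le 2H}}\frac{\delta(m,n)}{m(m+n)}+O(H^2),
\]
where $\delta$ is the direction-dependent density above, set to zero unless $(m,n)=1$.
\item Evaluate this sum as $\frac{2\log 2}{\pi^2}\log H+O(1)$. Split $n$ by parity, using $\sum_{n<m,\ n\equiv b\ (2)}\frac{1}{m+n}=\frac{\log 2}{2}+O(1/m)$. Then impose coprimality by M\"obius inversion over odd $d$ only, since $d$ must be odd in both admissible classes, with $\sum_{d\ \mathrm{odd}}\mu(d)d^{-2}=8/\pi^2$.
\end{enumerate}
Without these steps your sketch does not determine the constant. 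As written, with the stated weights, it does not even give the order $H^2\log H$.
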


\begin{proof}
Write
\begin{align}
\mathbf s=(x+u,y+v),
\qquad
\mathbf t=(x-u,y-v).
\end{align}
Then
\begin{align}
x^2+y^2-u^2-v^2
=
\mathbf s\cdot \mathbf t.
\end{align}
Hence the quadric equation is equivalent to
\begin{align}
\mathbf s\cdot \mathbf t=0.
\end{align}

We first discard the degenerate cases $\mathbf s=\mathbf 0$ or
$\mathbf t=\mathbf 0$.  These correspond respectively to
\begin{align}
(x,y)=-(u,v)
\qquad\text{and}\qquad
(x,y)=(u,v),
\end{align}
and together contribute only $O(H^2)$ points.  We may therefore assume
that $\mathbf s\neq \mathbf 0$ and $\mathbf t\neq \mathbf 0$.

Let $(m,n)$ be a primitive integer vector, that is, $\gcd(m,n)=1$,
parallel to $\mathbf s$.  Since $\mathbf t$ is orthogonal to
$\mathbf s$, there exist nonzero integers $g,h$ such that
\begin{align}
\mathbf s=(gm,gn),
\qquad
\mathbf t=(-hn,hm).
\end{align}
Consequently,
\begin{align}
2x=gm-hn,
\qquad
2y=gn+hm,
\end{align}
and
\begin{align}
2u=gm+hn,
\qquad
2v=gn-hm.
\end{align}
The parity conditions ensuring that $x,y,u,v$ are integers are
\begin{align}
gm\equiv hn\pmod 2,
\qquad
gn\equiv hm\pmod 2.
\end{align}

Because $(m,n)$ is primitive, $m$ and $n$ cannot both be even.  Hence
there are only two parity cases: either $m,n$ have opposite parity, or
$m,n$ are both odd.  Define
\begin{align}
\delta(m,n)
=
\begin{cases}
\frac14, & m,n \text{ have opposite parity},\\[1mm]
\frac12, & m,n \text{ are both odd}.
\end{cases}
\end{align}
Indeed, if $m,n$ have opposite parity, then $g$ and $h$ must both be
even; if $m,n$ are both odd, the condition is equivalent to
$g\equiv h\pmod 2$.

Thus $\delta(m,n)$ is the density of the admissible parity sublattice
$\Lambda_{m,n}$ in the
$(g,h)$-plane.

The box inequalities are
\begin{align}
|gm-hn|\leq 2H,
\qquad
|gn+hm|\leq 2H,
\end{align}
\begin{align}
|gm+hn|\leq 2H,
\qquad
|gn-hm|\leq 2H.
\end{align}
Equivalently,
\begin{align}
|gm|+|hn|\leq 2H,
\qquad
|gn|+|hm|\leq 2H.
\end{align}
For fixed $m\geq n>0$, let $\mathscr R_{m,n}(H)$ denote the region in the
$(g,h)$-plane cut out by these two inequalities.  It is a centrally
symmetric polygon.  Restricting first to the quadrant $g,h > 0$, its upper
boundary is given by
\begin{align}
h\leq \min\left\{\frac{2H-mg}{n},\frac{2H-ng}{m}\right\}.
\end{align}
The two lines meet at
\begin{align}
g=h=\frac{2H}{m+n}.
\end{align}
A direct calculation gives
\begin{align}
\operatorname{area}\bigl(\mathscr R_{m,n}(H)\bigr)
=
\frac{16H^2}{m(m+n)}.
\end{align}
By the standard lattice-point estimate for a planar polygon,
\begin{align}
\#\bigl(\mathscr R_{m,n}(H)\cap \Lambda_{m,n}\bigr)
=
\delta(m,n)\frac{16H^2}{m(m+n)}
+
O\!\left(\frac{H}{m}+1\right),
\end{align}
where $\Lambda_{m,n}$ is the parity sublattice determined above.
Removing the points with $g=0$ or $h=0$ changes the count by at most the
same error term.

For a primitive pair $m>n>0$, there are four projective primitive directions
with absolute coordinates $\{m,n\}$, represented by
\begin{align}
(m,n),\quad (m,-n),\quad (n,m),\quad (n,-m),
\end{align}
up to overall sign.  The exceptional directions with $n=0$ or $m=n$
contribute only $O(H^2)$ points in total.  Here and below, $(m,n)=1$
means $\gcd(m,n)=1$.  Summing over all primitive directions therefore gives
\begin{align}
T(H)
=
64H^2
\sum_{\substack{m>n\geq1\\ (m,n)=1\\ m+n\leq 2H}}
\frac{\delta(m,n)}{m(m+n)}
+
O(H^2).
\end{align}
It remains to evaluate the arithmetic sum.  We record the required estimate
separately.

\begin{lemma}
\label{lem:weighted-primitive-sum}
Let
\begin{align}
\delta(m,n)
=
\begin{cases}
\frac14, & (m,n)=1 \text{ and } m,n \text{ have opposite parity},\\[1mm]
\frac12, & (m,n)=1 \text{ and } m,n \text{ are both odd},\\[1mm]
0, & \text{otherwise}.
\end{cases}
\end{align}
Then
\begin{align}
\sum_{\substack{m>n\geq1\\ m+n\leq X}}
\frac{\delta(m,n)}{m(m+n)}
=
\frac{2\log 2}{\pi^2}\log X
+
O(1).
\end{align}
\end{lemma}

\begin{proof}
It is convenient first to replace the condition $m+n\le X$ by $m\le X$.
The difference is supported on $X/2<m\le X$, and there
\begin{align}
\sum_{n<m}\frac1{m(m+n)}\ll \frac1m.
\end{align}
Hence
\begin{align}
\sum_{\substack{m>n\ge 1\\ m+n\le X}}
\frac{\delta(m,n)}{m(m+n)}
=
\sum_{\substack{m>n\ge 1\\ m\le X}}
\frac{\delta(m,n)}{m(m+n)}
+O(1).
\end{align}
We shall therefore work with the latter cutoff.

For $a,b\in\{0,1\}$, define
\begin{align}
U_{a,b}(X)
=
\sum_{\substack{1\le n<m\le X\\ m\equiv a\;(\mathrm{mod}\ 2)\\ n\equiv b\;(\mathrm{mod}\ 2)}}
\frac1{m(m+n)}.
\end{align}
For fixed $m$, one has
\begin{align}
\sum_{\substack{1\le n<m\\ n\equiv b\;(\mathrm{mod}\ 2)}}
\frac1{m+n}
=
\frac12\sum_{1\le n<m}\frac1{m+n}
+O\!\left(\frac1m\right),
\end{align}
uniformly in $b$.  Since
\begin{align}
\sum_{1\le n<m}\frac1{m+n}
=
\log 2+O\!\left(\frac1m\right),
\end{align}
it follows that
\begin{align}
U_{a,b}(X)
=
\frac{\log 2}{2}
\sum_{\substack{m\le X\\ m\equiv a\;(\mathrm{mod}\ 2)}}\frac1m
+O(1).
\end{align}
Using
\begin{align}
\sum_{\substack{m\le X\\ m\equiv a\;(\mathrm{mod}\ 2)}}\frac1m
=
\frac12\log X+O(1),
\end{align}
we obtain, for each $a,b\in\{0,1\}$,
\begin{align}
U_{a,b}(X)
=
\frac14\log 2\,\log X+O(1).
\end{align}

Let
\begin{align}
U_{\mathrm{op}}(X)
=
\sum_{\substack{1\le n<m\le X\\ m,n\ \mathrm{of\ opposite\ parity}}}
\frac1{m(m+n)}
\end{align}
and
\begin{align}
U_{\mathrm{oo}}(X)
=
\sum_{\substack{1\le n<m\le X\\ m,n\ \mathrm{both\ odd}}}
\frac1{m(m+n)}.
\end{align}
From the preceding parity decomposition,
\begin{align}
U_{\mathrm{op}}(X)
=
\frac12\log 2\,\log X+O(1),
\qquad
U_{\mathrm{oo}}(X)
=
\frac14\log 2\,\log X+O(1).
\end{align}

We next impose coprimality.  If $m$ and $n$ have opposite parity, then any
common divisor is odd.  Hence M\"obius inversion gives
\begin{align}
\sum_{\substack{1\le n<m\le X\\ (m,n)=1\\ m,n\ \mathrm{of\ opposite\ parity}}}
\frac1{m(m+n)}
=
\sum_{\substack{d\ge1\\ d\ \mathrm{odd}}}
\frac{\mu(d)}{d^2}\,
U_{\mathrm{op}}\!\left(\frac Xd\right).
\end{align}
Since
\begin{align}
\sum_{\substack{d\ge1\\ d\ \mathrm{odd}}}\frac{\mu(d)}{d^2}
=
\prod_{p\ \mathrm{odd}}\left(1-\frac1{p^2}\right)
= \left(1 - \frac{1}{2^2}\right)^{-1} \frac{1}{\zeta(2)}
=
\frac{8}{\pi^2},
\end{align}
and the series
\begin{align}
\sum_{d\ge1}\frac{|\mu(d)|\log d}{d^2}
\end{align}
converges, we obtain
\begin{align}
\sum_{\substack{1\le n<m\le X\\ (m,n)=1\\ m,n\ \mathrm{of\ opposite\ parity}}}
\frac1{m(m+n)}
=
\frac4{\pi^2}\log 2\,\log X+O(1).
\end{align}
Similarly,
\begin{align}
\sum_{\substack{1\le n<m\le X\\ (m,n)=1\\ m,n\ \mathrm{both\ odd}}}
\frac1{m(m+n)}
=
\frac2{\pi^2}\log 2\,\log X+O(1).
\end{align}

Recalling the definition of $\delta(m,n)$, we therefore have
\begin{align}
\sum_{\substack{m>n\ge1\\ m+n\le X}}
\frac{\delta(m,n)}{m(m+n)}
=
\frac14\cdot \frac4{\pi^2}\log 2\,\log X
+
\frac12\cdot \frac2{\pi^2}\log 2\,\log X
+O(1).
\end{align}
Thus
\begin{align}
\sum_{\substack{m>n\ge1\\ m+n\le X}}
\frac{\delta(m,n)}{m(m+n)}
=
\frac{2\log 2}{\pi^2}\log X+O(1),
\end{align}
as claimed.
\end{proof}

Applying Lemma~\ref{lem:weighted-primitive-sum} with $X=2H$, we obtain
\begin{align}
T(H)
=
64H^2
\left(
\frac{2\log 2}{\pi^2}\log H+O(1)
\right)
+
O(H^2),
\end{align}
and hence
\begin{align}
T(H)
=
\frac{128\log 2}{\pi^2}H^2\log H+O(H^2).
\end{align}
\end{proof}

We now convert the signed ordered count into the multiset convention used for
the integer-centered class.  For $n\geq0$, let
\begin{align}
r_{2,H}(n)
=
\#\{(a,b)\in\mathbb Z^2: |a|,|b|\leq H,\ a^2+b^2=n\}.
\end{align}
Then
\begin{align}
T(H)=\sum_{n\leq 2H^2} r_{2,H}(n)^2.
\end{align}
For a generic unordered nonnegative representation
\begin{align}
n=x^2+y^2,
\qquad
0<x<y\leq H,
\end{align}
there are exactly eight signed ordered representations:
\begin{align}
(\pm x,\pm y),
\qquad
(\pm y,\pm x).
\end{align}
For \(n\geq 1\), the only exceptions arise from the boundary cases
\(x=0\) and \(x=y\).  These correspond respectively to
\(n=t^2\) and \(n=2t^2\), with \(t\in\mathbb Z_{\geq 1}\).
Thus, for \(n\geq 1\),
\begin{align}
r_{2,H}(n)=8s_H(n)-4\epsilon(n),
\end{align}
where
\begin{align}
\epsilon(n)
=
\begin{cases}
1, & \text{if } n=t^2 \text{ or } n=2t^2
\text{ for some } t\in\mathbb Z_{\geq 1},\\[1mm]
0, & \text{otherwise}.
\end{cases}
\end{align}
The term \(n=0\) contributes only \(O(1)\) to \(T(H)\), and may be absorbed
into the error term.  Since \(\epsilon(n)\) is supported only on integers of
the form \(t^2\) or \(2t^2\), there are \(O(H)\) such integers with
\(n\leq 2H^2\).  Also \(s_H(n)\leq H+1\).  Therefore
\begin{align}
T(H)
&=
\sum_{n\leq 2H^2} r_{2,H}(n)^2 \\
&=
64\sum_{1\leq n\leq 2H^2}s_H(n)^2
+
O\!\left(
\sum_{\substack{1\leq n\leq 2H^2\\ \epsilon(n)=1}}
s_H(n)
\right)
+
O(H)
+
O(1) \\
&=
64\sum_{1\leq n\leq 2H^2}s_H(n)^2
+
O(H^2).
\end{align}
Since \(s_H(0)=1\), replacing the range \(1\leq n\leq 2H^2\) by
\(0\leq n\leq 2H^2\), or simply by \(n\leq 2H^2\), changes the right-hand
side by only \(O(1)\).  Hence
\begin{align}
T(H)
=
64\sum_{n\leq 2H^2}s_H(n)^2
+
O(H^2).
\end{align}
Combining this with Proposition~\ref{prop:box-quadric-count} gives
\begin{align}
\sum_{n\leq 2H^2}s_H(n)^2
=
\frac{2\log 2}{\pi^2}H^2\log H
+
O(H^2).
\end{align}

\begin{theorem}
\label{thm:centered-integer-class}
The integer-centered class satisfies
\begin{align}
C_{\mathbb Z}(H)
=
\frac{\log 2}{\pi^2}H^2\log H
+
O(H^2).
\end{align}
\end{theorem}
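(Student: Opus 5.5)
The argument combines the identity for $C_{\mathbb Z}(H)$ from the integer-centered subsection with the second-moment asymptotic just derived. Recall
\begin{align}
C_{\mathbb Z}(H)
=
\frac12\sum_{n\leq 2H^2}s_H(n)^2-\frac12\sum_{n\leq 2H^2}s_H(n),
\end{align}
which was established (and rewritten in this form) in the proof of Theorem~\ref{thm:centered-integer-class-order}. The first sum is now known to leading order. Combining Proposition~\ref{prop:box-quadric-count} with the conversion $T(H)=64\sum_{n\leq 2H^2}s_H(n)^2+O(H^2)$ gives
\begin{align}
\sum_{n\leq 2H^2}s_H(n)^2=\frac{2\log 2}{\pi^2}H^2\log H+O(H^2).
\end{align}

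The second sum is handled exactly as before. Each pair $0\le x\le y\le H$ is counted once, so
\begin{align}
\sum_{n\leq 2H^2}s_H(n)=\frac{(H+1)(H+2)}{2}=O(H^2),
\end{align}
and it is absorbed into the error term. Multiplying the second-moment asymptotic by $\tfrac12$ then yields
\begin{align}
C_{\mathbb Z}(H)=\frac{\log 2}{\pi^2}H^2\log H+O(H^2),
\end{align}
which is the claim.

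No step here is a real obstacle; the substantive work is already done in Proposition~\ref{prop:box-quadric-count} and Lemma~\ref{lem:weighted-primitive-sum}. The one point to check is the bookkeeping in the conversion from $r_{2,H}$ to $s_H$. The cross term $\sum_{\epsilon(n)=1}s_H(n)$ must genuinely be $O(H^2)$ and not merely $O(H\cdot H)$ times a logarithm. I would verify this by noting that $\sum_n s_H(n)\,\epsilon(n)\le\sum_n s_H(n)=O(H^2)$. The $\epsilon(n)^2$ terms contribute only $O(H)$, and the $n=0$ term is $O(1)$. With these checks, every error term is $O(H^2)$ and the asymptotic follows.
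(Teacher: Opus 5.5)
Your proof is correct and follows the paper's argument: you split $C_{\mathbb Z}(H)$ as $\frac12\sum s_H(n)^2-\frac12\sum s_H(n)$, obtain the second moment from Proposition~\ref{prop:box-quadric-count} via $T(H)=64\sum_{n\le 2H^2}s_H(n)^2+O(H^2)$, and absorb the linear term $\frac{(H+1)(H+2)}{2}=O(H^2)$. Your cross-term bound $\sum_n s_H(n)\epsilon(n)\le\sum_n s_H(n)=O(H^2)$ is an equally valid alternative to the paper's bound, which uses $O(H)$ exceptional values of $n$ times $s_H(n)\le H+1$.
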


\begin{proof}
By definition,
\begin{align}
C_{\mathbb Z}(H)
=
\frac12
\sum_{n\leq 2H^2}
s_H(n)\bigl(s_H(n)-1\bigr).
\end{align}
Hence
\begin{align}
C_{\mathbb Z}(H)
=
\frac12
\sum_{n\leq 2H^2}s_H(n)^2
-
\frac12
\sum_{n\leq 2H^2}s_H(n).
\end{align}
The first term satisfies
\begin{align}
\frac12
\sum_{n\leq 2H^2}s_H(n)^2
=
\frac{\log 2}{\pi^2}H^2\log H
+
O(H^2),
\end{align}
by the preceding computation.  The second term is simply
\begin{align}
\sum_{n\leq 2H^2}s_H(n)
=
\#\{(x,y)\in\mathbb Z^2:0\leq x\leq y\leq H\}
=
\frac{(H+1)(H+2)}2
=
O(H^2).
\end{align}
Therefore
\begin{align}
C_{\mathbb Z}(H)
=
\frac{\log 2}{\pi^2}H^2\log H
+
O(H^2).
\end{align}
\end{proof}

The constant $\log 2/\pi^2$ reflects the geometry of the height box.  In
contrast with a radial count ordered by $x^2+y^2$, the present problem
samples all representations with $0\leq x,y\leq H$, including the outer
annular range $H^2<n\leq 2H^2$.  The factor $\log 2$ is the analytic trace
of this box geometry, arising from the integral
\begin{align}
\int_0^1 \frac{dt}{1+t}=\log 2.
\end{align}

For comparison, Appendix~\ref{app:finite-sqSum-enumeration} carries out the corresponding centered enumeration under a radial cutoff $Q(A;c)=Q(B;c)\le U$, where the leading constant changes because the underlying cutoff region is a disk rather than a square.

\subsection{A Parity-Restricted Companion Count}

Let $C_{\mathbb Z+\frac12}(H)$ denote the number of nontrivial centered
symmetric solutions in the half-integer-centered class, with centered height
at most $H$.  After subtracting the center and multiplying by $2$, such
solutions are counted by odd variables
\begin{align}
1\leq x\leq y\leq 2H,\qquad
1\leq u\leq v\leq 2H,
\end{align}
satisfying
\begin{align}
x^2+y^2=u^2+v^2.
\end{align}

\begin{proposition}
\label{prop:half-integer-class}
As $H\to\infty$,
\begin{align}
C_{\mathbb Z+\frac12}(H)
=
\frac{\log 2}{\pi^2}H^2\log H+O(H^2).
\end{align}
\end{proposition}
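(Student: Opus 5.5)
We mirror the integer-centered argument, replacing the full lattice by the odd sublattice. We define
\begin{align}
T_{\mathrm{odd}}(H)
=
\#\left\{(x,y,u,v)\in(2\mathbb Z+1)^4:
|x|,|y|,|u|,|v|\leq 2H,\ x^2+y^2=u^2+v^2\right\}.
\end{align}
Since odd coordinates never vanish and $x=y$ contributes $O(H)$ values of $n$, each generic unordered positive odd representation $\{x,y\}$ with $x<y$ gives exactly eight signed ordered ones. The same bookkeeping that preceded Theorem~\ref{thm:centered-integer-class} then gives
\begin{align}
C_{\mathbb Z+\frac12}(H)=\tfrac1{128}T_{\mathrm{odd}}(H)+O(H^2).
\end{align}
The factor is $\tfrac12\cdot\tfrac1{64}$. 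The diagonal term $\tfrac12\sum s^{\mathrm{odd}}_{2H}(n)$ counts pairs of odd numbers up to $2H$, so it is $O(H^2)$. It therefore suffices to show
\begin{align}
T_{\mathrm{odd}}(H)=\frac{128\log2}{\pi^2}H^2\log H+O(H^2).
\end{align}
Equivalently, with box size $2H$ this is one quarter of $T(2H)$, up to the error.

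For this we rerun the proof of Proposition~\ref{prop:box-quadric-count} with $H$ replaced by $2H$. We set $\mathbf s=(x+u,y+v)=g(m,n)$ and $\mathbf t=(x-u,y-v)=h(-n,m)$, where $(m,n)$ is primitive. The degenerate cases and the exceptional directions ($n=0$, $m=n$) are discarded as before at cost $O(H^2)$. The only change is the congruence condition on $(g,h)$.

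We now need $x=(gm-hn)/2$, $y=(gn+hm)/2$, $u=(gm+hn)/2$ and $v=(gn-hm)/2$ to be odd integers. This requires
\begin{align}
gm-hn\equiv gn+hm\equiv 2 \pmod 4,
\end{align}
and the conditions for $u,v$ follow because $2hn\equiv 2gm \pmod 4$ in the admissible cases. The computation then splits by the parity of $m,n$.

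If $m,n$ have opposite parity, say $m$ odd and $n$ even, then $g,h$ are even. Writing $g=2g'$ and $h=2h'$, the conditions become $g'm-h'n$ odd and $g'n+h'm$ odd. These reduce to $g'$ odd and $h'$ odd. The resulting density in the $(g,h)$-plane is $\tfrac14\cdot\tfrac14=\tfrac1{16}$.

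If $m,n$ are both odd, then $g\equiv h\pmod 2$. If both are even, then $gm-hn$ is $\equiv 0 \pmod 4$ or $\equiv 2 \pmod 4$ according to whether $g/2-h/2$ is even or odd. This gives density $\tfrac14\cdot\tfrac12$, provided the second condition is automatic, which still has to be checked. If both are odd, then $gm-hn\equiv g-h\pmod 4$ up to a sign analysis using $m,n\bmod 4$. Roughly half of these classes survive, and this is the case I expect to be fiddly.

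Call the resulting density $\delta_{\mathrm{odd}}(m,n)$. The target is that its weighted average matches $\tfrac14\delta(m,n)$ in aggregate, that is,
\begin{align}
\sum\frac{\delta_{\mathrm{odd}}(m,n)}{m(m+n)}=\frac14\cdot\frac{2\log2}{\pi^2}\log H+O(1).
\end{align}
I will verify this by classifying $(m\bmod 4,n\bmod 4)$ and then applying the parity-class splitting of Lemma~\ref{lem:weighted-primitive-sum}, extended to classes modulo $4$. That splitting is uniform because each residue class of $n$ carries an equal share $\log 2/4$ up to $O(1/m)$. The Möbius inversion over odd $d$ goes through since $d$ odd permutes the odd residue classes modulo $4$.

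If the detailed local densities turn out awkward, the alternative is to avoid them entirely. The count $T_{\mathrm{odd}}$ equals $\sum_{\epsilon}(-1)^{\cdots}$-type inclusion–exclusion over the sublattices $x\equiv\epsilon_1$, etc. More concretely, it is $T$ on the box of size $2H$ minus the contributions with some coordinate even. Here $x^2+y^2\equiv u^2+v^2\pmod 4$ forces the number of odd coordinates on each side to agree. The all-even part is $T(H)$. The mixed parts (one odd and one even on each side) can be computed by the same lattice method with density $\tfrac1{16}$-type factors.

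The main obstacle is the local density computation modulo $4$, and the check that its weighted sum yields exactly one quarter of the constant of Lemma~\ref{lem:weighted-primitive-sum}. All lattice-point error terms are $O(H/m+1)$ exactly as before, since the sublattices have bounded index. They therefore sum to $O(H^2)$ over $m\le 4H$.
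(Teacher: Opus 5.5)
Your route is the paper's: rerun Proposition~\ref{prop:box-quadric-count} on the odd sublattice, show that the admissible $(g,h)$-classes have relative density $\frac14$ in each primitive direction, and convert via $C_{\mathbb Z+\frac12}(H)=\frac1{128}T_{\mathrm{odd}}+O(H^2)$. That conversion is fine. The gap is that the only genuinely new input, the local density, is left open. The reasoning you sketch for it is also wrong in exactly the case you flag.

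\textbf{The false step.} You claim the conditions for $u,v$ follow from those for $x,y$. From $gm-hn\equiv 2\pmod 4$ you get $2u=gm+hn\equiv 2+2hn\pmod 4$. This is $\equiv 2$ only when $hn$ is even, and the integrality relation $hn\equiv gm\pmod 2$ does not give that.
\begin{itemize}
\item When $m,n,g,h$ are all odd, $x+u=gm$ is odd, so $x$ and $u$ have opposite parity. Hence \emph{no} odd $(g,h)$ class survives.
\item Even the $x,y$ conditions fail there: $2(x+y)=g(m+n)+h(m-n)\equiv 2\pmod 4$, because exactly one of $m\pm n$ is $\equiv 2\pmod 4$.
\item If you kept ``roughly half'' of those classes, the both-odd density would be $\frac14=\frac12\delta(m,n)$. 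The weighted sum would then be $\frac{3\log 2}{4\pi^2}\log H$ instead of $\frac{\log 2}{2\pi^2}\log H$, giving a leading constant $\frac32$ times too large and contradicting your own target.
\end{itemize}

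\textbf{The fix.} It is short and makes the mod-$4$ classification of $(m,n)$ unnecessary.
\begin{itemize}
\item All odd forces $x+u=gm$, $y+v=gn$, $u-x=hn$, $y-v=hm$ to be even. Since $(m,n)$ is primitive, $g$ and $h$ are even in every case.
\item Write $g=2g'$, $h=2h'$. Then $x=g'm-h'n$, $y=g'n+h'm$, $u=x+2h'n$, $v=y-2h'm$. So it suffices that $g'm-h'n$ and $g'n+h'm$ be odd.
\item For $m,n$ of opposite parity this means $g',h'$ both odd. The density in the $(g,h)$-plane is $\frac1{16}=\frac14\cdot\frac14$.
\item For $m,n$ both odd it means $g'\not\equiv h'\pmod 2$. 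The density is $\frac18=\frac14\cdot\frac12$.
\end{itemize}
Thus $\delta_{\mathrm{odd}}(m,n)=\frac14\delta(m,n)$ pointwise, not just in aggregate, and Lemma~\ref{lem:weighted-primitive-sum} applies verbatim. This is exactly the relative density $\frac14$ the paper invokes to get $T_{\mathrm{odd}}(M)=\frac{32\log 2}{\pi^2}M^2\log M+O(M^2)$. The mod-$4$ Möbius bookkeeping and the inclusion--exclusion alternative are then unnecessary; the latter would in any case require the same local computation for the mixed-parity parts.
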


\begin{proof}
Let
\begin{align}
T_{\mathrm{odd}}(M)
=
\#\left\{
(x,y,u,v)\in(2\mathbb Z+1)^4:
\begin{array}{c}
|x|,|y|,|u|,|v|\leq M,\\
x^2+y^2=u^2+v^2
\end{array}
\right\}.
\end{align}
The proof of Proposition~\ref{prop:box-quadric-count} applies with the
additional condition that $x,y,u,v$ are odd.  In the parametrization used
there, this is a congruence condition modulo \(4\) on the same admissible
\((g,h)\)-lattice.  For each primitive direction \((m,n)\), it selects
congruence classes of relative density \(1/4\) inside the previously
admissible parity classes, and the boundary contribution remains \(O(M^2)\)
after summing over primitive directions.  Consequently,
\begin{align}
T_{\mathrm{odd}}(M)
=
\frac{32\log 2}{\pi^2}M^2\log M+O(M^2).
\end{align}
Taking $M=2H$, we obtain
\begin{align}
T_{\mathrm{odd}}(2H)
=
\frac{128\log 2}{\pi^2}H^2\log H+O(H^2).
\end{align}
Converting from signed ordered representations to unordered nonnegative
representations divides the leading term by $64$, exactly as in the
integer-centered class.  Hence
\begin{align}
\sum_n \bigl(s_H^{\mathrm{odd}}(n)\bigr)^2
=
\frac{2\log 2}{\pi^2}H^2\log H+O(H^2),
\end{align}
where $s_H^{\mathrm{odd}}(n)$ counts unordered odd representations
\begin{align}
n=x^2+y^2,\qquad 1\leq x\leq y\leq 2H,\qquad x,y\ \text{odd}.
\end{align}
The linear term
\begin{align}
\sum_n s_H^{\mathrm{odd}}(n)
\end{align}
counts unordered odd pairs $1\leq x\leq y\leq 2H$, and is therefore
$O(H^2)$. Hence subtracting it only changes the estimate by $O(H^2)$.
Therefore
\begin{align}
C_{\mathbb Z+\frac12}(H)
=
\frac12\sum_n s_H^{\mathrm{odd}}(n)
\bigl(s_H^{\mathrm{odd}}(n)-1\bigr)
=
\frac{\log 2}{\pi^2}H^2\log H+O(H^2).
\end{align}
\end{proof}

\subsection{The Full Centered Symmetric Count}

\begin{theorem}
\label{thm:centered-symmetric-asymptotic}
Let $C_{\mathrm{sym}}(H)$ denote the number of nontrivial centered
symmetric configurations of centered height at most $H$, counted with
unordered multiset conventions and including both center classes.  Then
\begin{align}
C_{\mathrm{sym}}(H)
=
\frac{2\log 2}{\pi^2}H^2\log H+O(H^2).
\end{align}
\end{theorem}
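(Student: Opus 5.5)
The plan is to write the full centered count as the disjoint sum of the two center-class counts,
\begin{align}
C_{\mathrm{sym}}(H)=C_{\mathbb Z}(H)+C_{\mathbb Z+\frac12}(H),
\end{align}
and then add the asymptotics from Theorem~\ref{thm:centered-integer-class} and Proposition~\ref{prop:half-integer-class}.

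The first step is to check that the decomposition is genuinely disjoint and exhaustive under the multiset conventions. By Definition~\ref{def:symmetric}, every symmetric solution has some center $c\in\frac12\mathbb Z$, and that center lies in exactly one of the classes $\mathbb Z$ or $\mathbb Z+\frac12$.

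Uniqueness of the center follows from the first power sum. Sign-invariance of $A-c$ forces $\sum_{a\in A}(a-c)=0$, so $c$ equals the mean $\frac14\sum_{a\in A}a$. The center is therefore intrinsic to the configuration, and no configuration is counted in both classes.

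Within each class, Proposition~\ref{prop:symmetric-reduction} identifies the centered configurations bijectively with the objects counted by $C_{\mathbb Z}(H)$ and $C_{\mathbb Z+\frac12}(H)$. These are unordered pairs of distinct unordered representations, with even doubled coordinates (divided by $2$) in the first class and odd doubled coordinates bounded by $2H$ in the second. The height conventions agree: centered height at most $H$ means doubled coordinates at most $2H$, which in the integer class becomes $x,y,u,v\le H$ after halving. The trivial case $\{x,y\}=\{u,v\}$ is excluded in both counts, consistent with the factor $s(s-1)/2$.

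The second step is the addition itself:
\begin{align}
\frac{\log 2}{\pi^2}H^2\log H+\frac{\log 2}{\pi^2}H^2\log H+O(H^2)=\frac{2\log 2}{\pi^2}H^2\log H+O(H^2).
\end{align}

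The only point needing care is the bookkeeping of the first step, and specifically uniqueness of the center. Without it, a configuration that looked symmetric about two different centers would be double counted. The mean argument above rules this out, so the main obstacle is mild.
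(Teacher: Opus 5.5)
Your proposal is correct and follows the paper's proof: you split $C_{\mathrm{sym}}(H)=C_{\mathbb Z}(H)+C_{\mathbb Z+\frac12}(H)$ over the two disjoint center classes and add the asymptotics from Theorem~\ref{thm:centered-integer-class} and Proposition~\ref{prop:half-integer-class}. Your argument that the center equals the common mean, and so is unique, is a harmless elaboration of the disjointness the paper simply asserts.
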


\begin{proof}
The two center classes are disjoint, and
\begin{align}
C_{\mathrm{sym}}(H)
=
C_{\mathbb Z}(H)+C_{\mathbb Z+\frac12}(H).
\end{align}
The result follows from Theorem~\ref{thm:centered-integer-class} and
Proposition~\ref{prop:half-integer-class}.
\end{proof}

\section{Translations and the Symmetric Counting Law}
\label{sec:law}

We now pass from centered symmetric configurations to integer solutions with
arbitrary admissible center.  This accounts for the translation freedom in the
Prouhet--Tarry--Escott problem and gives the counting law for symmetric
solutions of bounded height.

We treat the two center classes separately.  In the integer-centered class, a
centered configuration has the form
\begin{align}
A_0=\{\pm x,\pm y\},\qquad B_0=\{\pm u,\pm v\},
\end{align}
with
\begin{align}
0\leq x\leq y,\qquad 0\leq u\leq v,
\end{align}
and centered height
\begin{align}
R=\max\{x,y,u,v\}.
\end{align}
For an integer $d$, define
\begin{align}
A_d=\{d\pm x,d\pm y\},\qquad B_d=\{d\pm u,d\pm v\}.
\end{align}
Since the Prouhet--Tarry--Escott property is invariant under translations,
$(A_d,B_d)$ is again an ideal degree-three solution.

The height of $(A_d,B_d)$ is at most $H$ if and only if all its entries lie
in $[-H,H]$.  Since the largest absolute centered entry is $R$, this is
equivalent to
\begin{align}
|d|+R\leq H.
\end{align}
Thus, for a fixed centered configuration of centered height $R\leq H$, the admissible
integer centers are precisely
\begin{align}
-H+R\leq d\leq H-R,
\end{align}
and their number is
\begin{align}
2(H-R)+1.
\end{align}

Let $\Delta C_{\mathbb Z}(R)$ denote the number of integer-centered
configurations of exact centered height $R$, so that
\begin{align}
\Delta C_{\mathbb Z}(R)
=
C_{\mathbb Z}(R)-C_{\mathbb Z}(R-1).
\end{align}
Then
\begin{align}
N_{\mathbb Z}(H)
=
\sum_{R\leq H}
\bigl(2(H-R)+1\bigr)\Delta C_{\mathbb Z}(R).
\end{align}
A discrete summation by parts gives
\begin{align}
N_{\mathbb Z}(H)
=
2\sum_{R=0}^{H-1}C_{\mathbb Z}(R)
+ C_{\mathbb Z}(H).
\end{align}
Using Theorem~\ref{thm:centered-integer-class}, we obtain
\begin{align}
N_{\mathbb Z}(H)
=
\frac{2\log 2}{3\pi^2}H^3\log H+O(H^3).
\end{align}

We next consider the half-integer-centered class.  A centered configuration
has the form
\begin{align}
A_0=\left\{\pm\frac{x}{2},\pm\frac{y}{2}\right\},
\qquad
B_0=\left\{\pm\frac{u}{2},\pm\frac{v}{2}\right\},
\end{align}
where $x,y,u,v$ are odd integers and
\begin{align}
1\leq x\leq y,\qquad 1\leq u\leq v.
\end{align}
Its centered height is
\begin{align}
R=\frac12\max\{x,y,u,v\}.
\end{align}
For an odd integer $d$, define
\begin{align}
A_d=
\left\{\frac{d\pm x}{2},\frac{d\pm y}{2}\right\},
\qquad
B_d=
\left\{\frac{d\pm u}{2},\frac{d\pm v}{2}\right\}.
\end{align}
The condition that all entries lie in $[-H,H]$ is
\begin{align}
|d|+2R\leq 2H.
\end{align}
Since $x,y,u,v$ are odd, the quantity $2R$ is odd.  Hence
$2(H-R)$ is an odd integer, and the admissible odd integers $d$ are
precisely those satisfying
\begin{align}
|d|\leq 2(H-R).
\end{align}
Their number is therefore
\begin{align}
2(H-R)+1.
\end{align}

Let $\Delta C_{\mathbb Z+\frac12}(R)$ denote the number of
half-integer-centered configurations of exact centered height $R$, where
$R$ ranges over the half-integers.  Then
\begin{align}
N_{\mathbb Z+\frac12}(H)
=
\sum_{\substack{R\leq H\\ R\in \mathbb Z+\frac12}}
\bigl(2(H-R)+1\bigr)
\Delta C_{\mathbb Z+\frac12}(R).
\end{align}
Equivalently, by summation by parts over the half-integer height values,
\begin{align}
N_{\mathbb Z+\frac12}(H)
=
2
\sum_{\substack{0<R\leq H\\ R\in \mathbb Z+\frac12}}
C_{\mathbb Z+\frac12}(R).
\end{align}
Using Proposition~\ref{prop:half-integer-class}, this gives
\begin{align}
N_{\mathbb Z+\frac12}(H)
=
\frac{2\log 2}{3\pi^2}H^3\log H+O(H^3).
\end{align}

Combining the two center classes gives the shifted symmetric counting law.

\begin{theorem}
\label{thm:Nsym}
Let $N_{\mathrm{sym}}(H)$ denote the number of nontrivial symmetric ideal
degree-three Prouhet--Tarry--Escott solutions of size four and height at most
$H$, with distinct admissible centers counted separately.  Then
\begin{align}
N_{\mathrm{sym}}(H)
=
\frac{4\log 2}{3\pi^2}H^3\log H+O(H^3).
\end{align}
\end{theorem}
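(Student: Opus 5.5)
The plan is to decompose $N_{\mathrm{sym}}(H)$ according to the center class and then add the two shifted counts obtained just above. Three steps are needed.

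\emph{Step 1 (disjointness).} Every nontrivial symmetric solution has a center $c\in\frac12\mathbb Z$. For an ideal solution this center is unique, because $4c$ equals the common power sum $\sum_{a\in A}a$. Therefore
\begin{align}
N_{\mathrm{sym}}(H)=N_{\mathbb Z}(H)+N_{\mathbb Z+\frac12}(H),
\end{align}
with no overlap. Each shifted solution of height at most $H$ arises from exactly one centered configuration together with one admissible translation. This is the bijection already used to write $N_{\mathbb Z}$ and $N_{\mathbb Z+\frac12}$ as weighted sums of $\Delta C$.

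\emph{Step 2 (the integer-centered sum).} I would make explicit the passage from the summation-by-parts identity to the asymptotic. Insert $C_{\mathbb Z}(R)=\frac{\log 2}{\pi^2}R^2\log R+O(R^2)$ from Theorem~\ref{thm:centered-integer-class}, with the error uniform for $R\ge 2$; small $R$ contribute $O(1)$. Then
\begin{align}
2\sum_{R<H}R^2\log R=\tfrac23H^3\log H+O(H^3),
\end{align}
which follows by comparison with $\int_1^H t^2\log t\,dt$. The error terms sum to $O(H^3)$, and $C_{\mathbb Z}(H)=O(H^2\log H)$. Together these give $\frac{2\log 2}{3\pi^2}H^3\log H+O(H^3)$.

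\emph{Step 3 (the half-integer-centered sum).} The analogous sum runs over $R\in\mathbb Z+\frac12$, still with spacing $1$. The same comparison applies, using Proposition~\ref{prop:half-integer-class} evaluated at half-integer $R$. Its error term must be read as $C_{\mathbb Z+\frac12}(R)=\frac{\log2}{\pi^2}R^2\log R+O(R^2)$ uniformly in real $R$. This holds because the proof controls $T_{\mathrm{odd}}(M)$ for every integer $M=2R$.

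Adding the two shifted counts gives the constant $\frac{4\log 2}{3\pi^2}$.

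The main obstacle is bookkeeping rather than analysis. First, the summation by parts in the half-integer class must carry the correct weights $2(H-R)+1$; note that $2(H-R)$ is odd, so the boundary term differs from the integer case. Second, the $O(R^2)$ errors must hold uniformly down to small $R$, so that summing them costs only $O(H^3)$. I would check the first point by writing $2(H-R)+1=\sum_{R\le S\le H}2-1$ and recombining. Any discrepancy in boundary terms is $O(C(H))=O(H^2\log H)$, which is negligible. Hence even an imprecise form of the summation-by-parts identity suffices for the stated error $O(H^3)$.
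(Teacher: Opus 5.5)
Your proposal is correct and takes essentially the same route as the paper. You split by center class, which is disjoint because the center is forced to be the common mean. You then sum the centered asymptotics of Theorem~\ref{thm:centered-integer-class} and Proposition~\ref{prop:half-integer-class} over the $2(H-R)+1$ admissible translations via summation by parts, and add the two resulting $\frac{2\log 2}{3\pi^2}H^3\log H$ terms. Your extra remarks (uniformity of the $O(R^2)$ errors, reading the half-integer estimate at odd $M=2R$, and the boundary terms being $O(H^2\log H)$) make explicit details the paper leaves implicit.
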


\begin{proof}
By the preceding estimates,
\begin{align}
N_{\mathbb Z}(H)
=
\frac{2\log 2}{3\pi^2}H^3\log H+O(H^3)
\end{align}
and
\begin{align}
N_{\mathbb Z+\frac12}(H)
=
\frac{2\log 2}{3\pi^2}H^3\log H+O(H^3).
\end{align}
Since the two center classes are disjoint,
\begin{align}
N_{\mathrm{sym}}(H)
=
N_{\mathbb Z}(H)+N_{\mathbb Z+\frac12}(H).
\end{align}
Therefore
\begin{align}
N_{\mathrm{sym}}(H)
=
\frac{4\log 2}{3\pi^2}H^3\log H+O(H^3).
\end{align}
\end{proof}

This theorem shows that summing over admissible centers enlarges the centered
symmetric count by one power of $H$, while preserving the logarithmic
enhancement coming from the sum-of-two-squares structure.  The essential
arithmetic is already present in the centered equation
\begin{align}
x^2+y^2=u^2+v^2,
\end{align}
and the center parameter converts the centered asymptotic into the shifted
asymptotic above.

\section{Enumeration Evidence and a Polynomial-Scale Density Conjecture}
\label{sec:conjecture}

The estimates proved above show that the symmetric locus is large: after summing over admissible centers, it contains
\begin{align}
N_{\mathrm{sym}}(H)
=
\frac{4\log 2}{3\pi^2}H^3\log H+O(H^3)
\end{align}
solutions of height at most $H$. A natural next question is how this family compares with the full set of ideal degree-three Prouhet--Tarry--Escott solutions of bounded height.

Let $N(H)$ denote the number of nontrivial ideal degree-three Prouhet--Tarry--Escott solutions of size four and height at most $H$, counted with the same conventions used for $N_{\mathrm{sym}}(H)$. Thus permutations inside each multiset are not counted separately, the two multisets are counted up to interchange, and distinct integer translates are counted separately. Let
\begin{align}
R(H)=\frac{N_{\mathrm{sym}}(H)}{N(H)}
\end{align}
denote the corresponding proportion of symmetric solutions.

The preceding sections give a structural reason to expect the symmetric locus to remain visible inside the full height-ordered solution space. General degree-three solutions are constrained by the three coupled equations
\begin{align}
\sum_i a_i=\sum_i b_i,\qquad
\sum_i a_i^2=\sum_i b_i^2,\qquad
\sum_i a_i^3=\sum_i b_i^3.
\end{align}
By contrast, the symmetric locus reduces after centering to the single quadratic equation
\begin{align}
x^2+y^2=u^2+v^2.
\end{align}
This reduction places symmetric solutions in the arithmetic setting of representations as sums of two squares, and the second-moment estimate for $r_2(n)$ explains the logarithmic enhancement in their count.

Finite-height enumeration is consistent with this picture. In computations over bounded boxes, symmetric solutions form a substantial fraction of all solutions at small and moderate height. Although the observed proportion decreases over the computed range, the decline is gradual; moreover, the symmetric and non-symmetric counts appear to have comparable polynomial growth. This suggests that the symmetric locus may have the same height exponent as the full ideal degree-three solution space, even if its literal density might tend to zero.

A companion centered enumeration with a finite-squared-sum cutoff is given in Appendix~\ref{app:finite-sqSum-enumeration}; it is included to compare the radial cutoff geometry with the height-box geometry used in the main theorems.

These observations motivate the following distributional conjecture.

\begin{conjecture}[Polynomial-scale density of the symmetric locus]
\label{conj:polynomial-density}
Let $N(H)$ denote the number of nontrivial ideal degree-three Prouhet--Tarry--Escott solutions of size four and height at most $H$, counted with the conventions above. Then
\begin{align}
N(H)=H^{3+o(1)}.
\end{align}
Consequently, since
\begin{align}
N_{\mathrm{sym}}(H)
=
\frac{4\log 2}{3\pi^2}H^3\log H+O(H^3),
\end{align}
the symmetric locus accounts for the full solution space up to subpolynomial factors:
\begin{align}
\frac{N_{\mathrm{sym}}(H)}{N(H)}\geq H^{-o(1)}.
\end{align}
\end{conjecture}

This conjecture does not assert positive limiting density. Instead, it asserts that symmetric solutions have the same polynomial height exponent as all solutions. Thus symmetric solutions may fail to occupy a fixed positive proportion of the solution space, but would still be arithmetically visible at the level of power-law growth.

One may also formulate a sharper logarithmic version. Since
\begin{align}
N_{\mathrm{sym}}(H)
=
\frac{4\log 2}{3\pi^2}H^3\log H+O(H^3),
\end{align}
an estimate of the form
\begin{align}
N(H)\ll H^3(\log H)^A
\end{align}
with $A\geq 1$ would imply
\begin{align}
R(H)\gg (\log H)^{1-A}.
\end{align}
The case $A=1$ would imply positive lower density of the symmetric locus, whereas larger values of $A$ would allow the density to decay at most logarithmically. The polynomial-scale form in Conjecture~\ref{conj:polynomial-density} is weaker and is therefore the more robust conjecture.

\begin{remark}[Arithmetic amplification]
The conjecture above is not suggested by the naive degree-weighted heuristic.
Over $\mathbb R$, the symmetric ansatz
\begin{align}
A=\{\pm x,\pm y\},\qquad B=\{\pm u,\pm v\}
\end{align}
imposes a special pairing structure and therefore appears to define a restricted subfamily of the full degree-three Prouhet--Tarry--Escott solution variety. 
The symmetric condition is
\begin{align}
x^2+y^2=u^2+v^2,
\end{align}
whose solutions are organized by the representation function $r_2(n)$. The second-moment estimate
\begin{align}
\sum_{n\leq X} r_2(n)^2\asymp X\log X
\end{align}
gives the logarithmic enhancement
\begin{align}
C_{\mathrm{sym}}(H)
=
\frac{2 \log 2}{\pi^2}H^2\log H+O(H^2),
\qquad
N_{\mathrm{sym}}(H)
=
\frac{4\log 2}{3\pi^2}H^3\log H+O(H^3).
\end{align}
Thus the symmetric locus is amplified by the arithmetic multiplicity of
representations as sums of two squares. This explains why it can be visible
at the polynomial scale in the integral Prouhet--Tarry--Escott problem.
\end{remark}

A proof of Conjecture~\ref{conj:polynomial-density} would require information about the total number $N(H)$ of ideal degree-three solutions. At present, no asymptotic formula for $N(H)$ is known in the counting convention used here. One possible route would be to obtain matching upper and lower bounds for $N(H)$, or at least an upper bound of order $H^3\log H$ up to subpolynomial factors. Such a result would place the symmetric asymptotic proved in Section~\ref{sec:law} in direct comparison with the full solution space.

There are several natural refinements. One may count primitive solutions, where the entries have no common factor, or quotient further by affine equivalence. One may also ask whether analogous symmetric loci occur in higher-degree ideal Prouhet--Tarry--Escott problems, and whether their counts are again governed by representation functions of comparable arithmetic origin. These questions suggest that symmetry-driven subvarieties may play a broader role in the distribution of solutions to nonlinear Diophantine systems.

From the physical point of view, the conjecture points to a possible organizing principle for anomaly-free charge spectra. Under the degree-three Prouhet--Tarry--Escott correspondence, symmetric solutions give Lee--Takahashi--Tsai (LTT) particle spectra organized into doublet-like pairs, with states sharing nearby masses due to the charge assignments~\cite{Lee:2026djo}. Polynomial-scale abundance of the symmetric locus would therefore indicate that such paired structures are not exceptional coincidences, but a persistent arithmetic feature of anomaly-free spectra shaped by the constraints underlying anomaly cancellation.

\section*{Acknowledgments}

Y.-D.T. is supported by a Dorothy Hodgkin Fellowship funded by the Royal Society, UK, and is grateful for start-up support from the University of Manchester and the University of Sheffield.
This work was supported by JSPS KAKENHI Grant Numbers 25H02165 (F.T.), 25KJ0564 (J.L.), and 26K00695 (F.T.). This work was also supported by the World Premier International Research Center Initiative (WPI), MEXT, Japan, and by COST Action COSMIC WISPers CA21106, supported by COST (European Cooperation in Science and Technology). J.L. was also supported by the Graduate Program on Physics for the Universe (GP-PU), Tohoku University.


\begingroup
\setlength{\bibsep}{0.45em}

\bibliographystyle{utcaps_YTmod}
\bibliography{ref}

\endgroup


\appendix
\markboth{APPENDICES}{APPENDICES}

\section{The Full PTE Space and Vinogradov Mean Values}
\label{sec:vmvt-full-pte}

The symmetric count proved above gives
\begin{align}
N_{\mathrm{sym}}(H)
=
\frac{4\log 2}{3\pi^2}H^3\log H+O(H^3).
\end{align}
Conjecture~\ref{conj:polynomial-density} asks whether the full ideal degree-three Prouhet--Tarry--Escott solution space has the same polynomial height exponent. Let $N(H)$ denote the number of nontrivial ideal degree-three Prouhet--Tarry--Escott solutions of height at most $H$, counted with the multiset conventions of Section~\ref{sec:conjecture}. The conjectural comparison is
\begin{equation}
N(H)=H^{3+o(1)}.
\label{eq:polynomial-density-target}
\end{equation}
Vinogradov's mean-value theorem provides a natural ambient counting framework for this question, although its direct application is too coarse to prove \eqref{eq:polynomial-density-target}.

\subsection{The Vinogradov Ambient Count}
\label{subsec:vinogradov-ambient-count}

For integers $s,k\geq 1$, let $J_{s,k}(X)$ denote the number of integer solutions to the Vinogradov system
\begin{equation}
x_1^j+\cdots+x_s^j
=
y_1^j+\cdots+y_s^j,
\qquad 1\leq j\leq k,
\label{eq:vinogradov-system}
\end{equation}
with
\begin{align}
1\leq x_i,y_i\leq X.
\end{align}
Equivalently,
\begin{equation}
J_{s,k}(X)
=
\int_{[0,1)^k}
\left|
\sum_{1\leq x\leq X}
\exp\!\left[ 2\pi i \left( \alpha_1x+\alpha_2x^2+\cdots+\alpha_kx^k \right) \right]
\right|^{2s}
\,d\boldsymbol{\alpha}.
\label{eq:vmvt-moment}
\end{equation}
Thus $J_{s,k}(X)$ counts pairs of ordered $s$-tuples whose power sums agree through degree $k$.

The ideal degree-three Prouhet--Tarry--Escott problem corresponds to
\begin{align}
s=4,\qquad k=3.
\end{align}
Indeed, before quotienting by permutations, before removing diagonal solutions, and before imposing the multiset conventions of this paper, ordered ideal degree-three solutions are precisely the solutions of
\begin{equation}
\sum_{i=1}^4 a_i^j
=
\sum_{i=1}^4 b_i^j,
\qquad j=1,2,3.
\label{eq:ordered-pte-vmvt}
\end{equation}

The main conjecture in Vinogradov's mean-value theorem, now a theorem
\cite{Wooley2014TheCC,Bourgain2015ProofOT}, gives for every
$\varepsilon>0$
\begin{equation}
J_{s,k}(X)
\ll_{\varepsilon,s,k}
X^{s+\varepsilon}
+
X^{2s-\frac{k(k+1)}2+\varepsilon}.
\label{eq:vmvt-main-bound}
\end{equation}
For $s=4$ and $k=3$, this yields
\begin{equation}
J_{4,3}(X)\ll_\varepsilon X^{4+\varepsilon}.
\label{eq:J43-bound}
\end{equation}

To compare this with the height convention used here, suppose that
\begin{align}
|a_i|,|b_i|\leq H
\end{align}
and that \eqref{eq:ordered-pte-vmvt} holds. After the common translation
\begin{align}
a_i'=a_i+H+1,\qquad b_i'=b_i+H+1,
\end{align}
we have
\begin{align}
1\leq a_i',b_i'\leq 2H+1,
\end{align}
and the equal-power-sum relations remain valid. Hence every ordered integral degree-three solution of height at most $H$ is counted by $J_{4,3}(2H+1)$. Passing from ordered tuples to multisets changes the count by only bounded factors, and removing trivial solutions can only decrease it. Therefore
\begin{equation}
N(H)\ll_\varepsilon H^{4+\varepsilon}.
\label{eq:SH-raw-vmvt}
\end{equation}

This bound is rigorous but does not approach the conjectural scale \eqref{eq:polynomial-density-target}. In particular, combining \eqref{eq:SH-raw-vmvt} with the symmetric asymptotic gives only
\begin{align}
\frac{N_{\mathrm{sym}}(H)}{N(H)}
\gg_\varepsilon
H^{-1-\varepsilon}\log H.
\end{align}
The issue is not a defect of Vinogradov's mean-value theorem. Rather, $J_{4,3}(X)$ is an ambient ordered count: it includes diagonal configurations, counts permutations separately, and does not isolate the affine directions that are intrinsic to the Prouhet--Tarry--Escott problem.

\subsection{Affine Reduction and the Missing Estimate}
\label{subsec:affine-reduced-viewpoint}

The Prouhet--Tarry--Escott equations are invariant under affine transformations
\begin{align}
t\mapsto \xi t+ \eta.
\end{align}
For the height count considered in this paper, the translation parameter is especially visible. For the symmetric locus, Theorem~\ref{thm:centered-symmetric-asymptotic} gives
\begin{align}
C_{\mathrm{sym}}(H)
=
\frac{2 \log 2}{\pi^2}H^2\log H+O(H^2),
\end{align}
and Section~\ref{sec:law} shows that summing over integer translates produces
\begin{align}
N_{\mathrm{sym}}(H)
=
\frac{4\log 2}{3\pi^2}H^3\log H+O(H^3).
\end{align}
Thus, in this locus, one power of $H$ comes from translation, while the essential arithmetic is already present in the centered problem.

For the full integral solution space, one must be slightly more careful. An integral solution need not translate by an integer to an integral centered solution, since its common mean may be nonintegral. Thus integrally centered solutions represent only one affine residue class of the full integral problem. A complete affine reduction would decompose the full solution space according to the common mean modulo $\mathbb Z$, then estimate each resulting class after removing its translation parameter.

This suggests that the natural missing input is not merely a sharper bound for
the raw Vinogradov count, but an affine-reduced estimate for the genuinely
nontrivial centered problem in each relevant residue class.  To make this
precise, let
\begin{equation}
\rho\in\{0,1,2,3\}
\label{eq:affine-residue-index}
\end{equation}
index the affine residue class of the common mean modulo \(\mathbb Z\), so
that the common mean lies in
\begin{equation}
\frac{\rho}{4}+\mathbb Z.
\label{eq:affine-residue-class}
\end{equation}
For this residue class, let \(C_\rho(H)\) denote the corresponding centered
count after subtracting the common mean and removing the integer translation
parameter, with the same nontriviality and multiset conventions as in the
main text.  Let \(N_\rho(H)\) denote the shifted count obtained from this
centered class by summing over all admissible integer translations subject to
the height bound.

The expected affine-reduced estimate is then, uniformly in
\(\rho\in\{0,1,2,3\}\),
\begin{equation}
C_\rho(H)\ll H^2(\log H)^{O(1)}.
\label{eq:desired-centered-bound}
\end{equation}
Such estimates would imply
\begin{equation}
N_\rho(H)\ll H^3(\log H)^{O(1)}
\label{eq:desired-shifted-bound}
\end{equation}
for each affine residue class, and hence
\begin{equation}
N(H)\ll H^3(\log H)^{O(1)}
\label{eq:desired-full-shifted-bound}
\end{equation}
after summing over the four residue classes.  This would place the full
solution space on the same polynomial scale as the symmetric subfamily.  A
bound as sharp as
\begin{equation}
N(H)\ll H^3\log H
\label{eq:sharp-full-shifted-bound}
\end{equation}
would imply positive lower density of the symmetric locus.

\subsection{Outlook}
\label{subsec:analytic-program}

The preceding discussion points to a refinement of the Vinogradov framework adapted to the ideal degree-three Prouhet--Tarry--Escott problem. Such a theory would need to separate diagonal and permutation contributions, remove the translation direction, account for the possible residue classes of the common mean, and then estimate the remaining off-diagonal centered solutions.

The symmetric locus is special because, after centering, the full system collapses to the single quadratic equation
\begin{align}
x^2+y^2=u^2+v^2.
\end{align}
Away from this locus, the problem retains its genuinely mixed-degree character: quadratic and cubic constraints must be handled simultaneously. This suggests that future work on nonsymmetric families may require methods sensitive to mixed cubic--quadratic systems, rather than only equal-degree Vinogradov systems. A useful comparison is provided by circle-method treatments of cubic--quadric intersections, where lower-degree forms cannot simply be discarded during the analytic reduction; see, for example,~\cite{Browning_Dietmann_Heath-Brown_2015}.

\section{Finite-Height Enumeration}
\label{app:finite-enumeration}

In this appendix we describe the finite-height enumeration used to support
the discussion surrounding Conjecture~\ref{conj:polynomial-density}.  The
computations are not used in the proofs of the main theorems.  Their role is
to illustrate the behavior of the height-ordered solution space and to compare
the symmetric count $N_{\mathrm{sym}}(H)$ with the total count $N(H)$ at
accessible heights.

For each height $H$, we enumerate nontrivial ideal degree-three
Prouhet--Tarry--Escott solutions of size four satisfying
\begin{align}
\sum_{i=1}^4 a_i^j=\sum_{i=1}^4 b_i^j,
\qquad j=1,2,3,
\end{align}
with
\begin{align}
|a_i|, |b_i| \leq H.
\end{align}
The counting conventions agree with those used in the main text:
permutations within each multiset are not counted separately, the two
multisets are identified up to interchange, trivial solutions are excluded,
and distinct admissible centers are counted separately.

The symmetric count $N_{\mathrm{sym}}(H)$ used in the figures includes both
center classes
\begin{align}
c\in\mathbb Z
\qquad\text{and}\qquad
c\in\mathbb Z+\frac12.
\end{align}
Thus it is the shifted symmetric count from Theorem~\ref{thm:Nsym}, not only
the integer-centered subcount.

Figure~\ref{fig:SolutionsCounts} compares the symmetric and non-symmetric
counts as functions of $H$.  The gray dashed curve is proportional to the
proved leading term
\begin{align}
\frac{4\log 2}{3\pi^2}H^3\log H
\end{align}
from Theorem~\ref{thm:Nsym}; it is included as a guide to the growth of the
full symmetric locus.  

\begin{figure}
    \centering
    \includegraphics[width=0.70\linewidth]{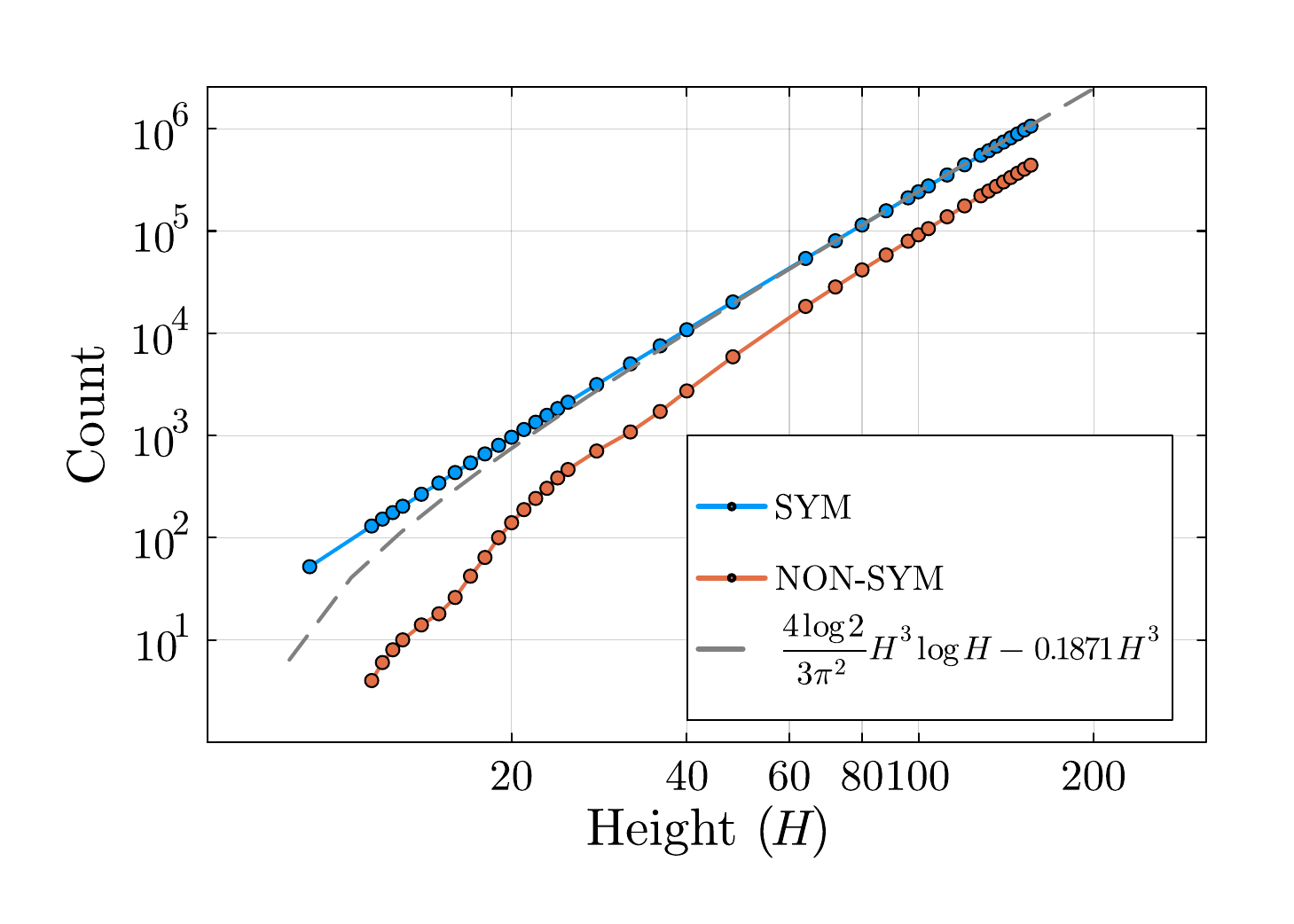}
    \caption{Finite-height enumeration of nontrivial ideal degree-three
Prouhet--Tarry--Escott solutions of size four.
The blue markers give the full symmetric count $N_{\mathrm{sym}}(H)$,
including both center classes $c\in\mathbb Z$ and
$c\in\mathbb Z+\frac12$, while the orange markers give the non-symmetric
count $N_{\mathrm{non\text{-}sym}}(H)$.
All counts use the multiset, interchange, and admissible-center conventions
described in the text.
The gray dashed curve is included as a guide to the proved growth of the full
symmetric locus.
Its leading term is
$\frac{4\log 2}{3\pi^2}H^3\log H$, as obtained in
Theorem~\ref{thm:Nsym}.
We leave the coefficient of the subleading term as a free fitting parameter
to better capture the behavior at finite $H$.
The coefficient used here is obtained by linear regression on the log-log
plane using the data with $H\geq 80$.
The displayed data are compatible with the asymptotic behavior established
for $N_{\mathrm{sym}}(H)$ and show that the symmetric and non-symmetric
counts remain comparable in polynomial scale over the computed range.
    }
    \label{fig:SolutionsCounts}
\end{figure}

Figure~\ref{fig:ratio} shows the observed symmetric proportion
\begin{align}
R(H)=\frac{N_{\mathrm{sym}}(H)}{N(H)}.
\end{align}
Over the computed range, this proportion decreases but remains substantial.
This is consistent with the polynomial-scale density viewpoint of
Conjecture~\ref{conj:polynomial-density}: the symmetric locus may have the
same polynomial height exponent as the full height-ordered solution space,
even if its limiting density is not positive.

\begin{figure}
    \centering
    \includegraphics[width=0.70\linewidth]{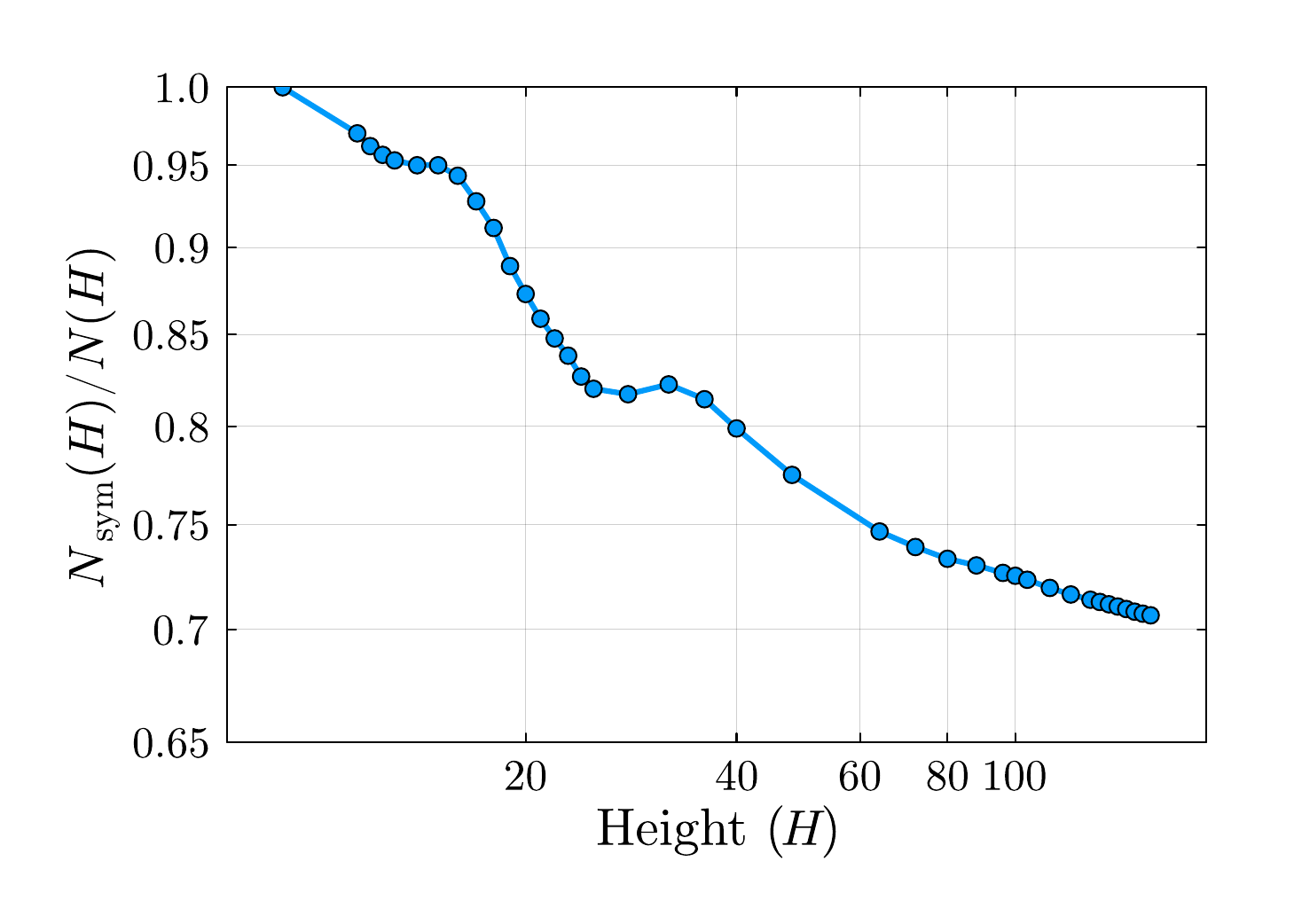}
    \caption{
    Observed symmetric proportion $R(H)$
    among all nontrivial ideal degree-three Prouhet--Tarry--Escott solutions
    of size four in the finite enumeration, counted with the multiset,
    interchange, and admissible-center conventions of the text.  The
    proportion decreases over the displayed range but remains substantial
    there.  
    }
    \label{fig:ratio}
\end{figure}

\FloatBarrier

\section{Alternative Counting Geometry: Finite-Squared-Sum Enumeration}
\label{app:finite-sqSum-enumeration}

In the main body of the paper we count centered symmetric configurations by a height-box cutoff.
This appendix records a companion count for a radial cutoff.
The result is not used in the proof of the height-box asymptotic, but it is useful for comparison: the logarithmic growth again comes from the second moment of the sum-of-two-squares representation function, while the leading constant changes because the cutoff geometry has changed.

We use the following normalization.  For a symmetric centered configuration with center $c$, define its one-side centered squared sum by
\begin{align}
Q(A;c):=\sum_{a\in A}(a-c)^2.
\end{align}
Since the quadratic Prouhet--Tarry--Escott relation gives
\begin{align}
\sum_{a\in A}(a-c)^2=\sum_{b\in B}(b-c)^2,
\end{align}
this is also equal to $Q(B;c)$.  We count centered configurations satisfying
\begin{align}
Q(A;c)=Q(B;c)\le U.
\end{align}
Thus \(U\in\mathbb Z_{>0}\) is a radial cutoff on either one of the two
centered multisets.

A centered configuration has the form
\begin{align}
A_0=\{\pm x,\pm y\},\qquad B_0=\{\pm u,\pm v\},
\qquad
x,y,u,v\in\mathbb Z_{\geq 0}
\ \text{or}\
\mathbb Z_{\geq 0}+\frac{1}{2},
\end{align}
and the common quadratic condition is
\begin{align}
x^2+y^2=u^2+v^2.
\end{align}
If
\begin{align}
p=x^2+y^2=u^2+v^2,
\end{align}
then
\begin{align}
Q(A_0;0)=Q(B_0;0)=2p
\qquad\text{and}\qquad
p\in \frac{1}{2}\mathbb Z.
\end{align}
Hence the radial cutoff \(Q\leq U\) is equivalent to
\begin{align}
p \leq \frac U2 .
\end{align}
The solutions can be divided into two classes,
\(p=n\in\mathbb Z\) and \(p=m\in\mathbb Z+\frac12\), corresponding to the
integer-centered class and the half-integer-centered class, respectively.

Geometrically, the height-box count in Section~\ref{sec:counting} samples the square
\begin{align}
0\le x,y\le H,
\end{align}
whereas the present count samples the quarter disk
\begin{align}
x^2+y^2\le \frac U2.
\end{align}
This is the reason the leading constant below does not contain the factor $\log 2$ appearing in the height-box asymptotic.

\subsection{The Integer-centered Class}

For $n\ge 0$, let
\begin{align}
s(n)
=
\#\bigl\{\{x,y\} : 0\le x\le y,\ x,y\in\mathbb Z,\ x^2+y^2=n\bigr\}.
\end{align}
Thus $s(n)$ counts unordered nonnegative representations of $n$ as a sum of
two squares. In particular, $s(0)=1$.

Let $C_{\mathbb Z}^{\rm rad}(U)$ denote the number of nontrivial centered symmetric configurations in the integer-centered class satisfying $Q\le U$, counted with unordered multiset conventions and with $(A,B)$ identified with $(B,A)$.
Then
\begin{align}
C_{\mathbb Z}^{\rm rad}(U)
=
\frac12
\sum_{n \leq \lfloor U/2\rfloor}
s(n)\bigl(s(n)-1\bigr).
\end{align}

For $n\geq 1$, the relation between $s(n)$ and $r_2(n)$ is
\begin{equation}
s(n)
=
\frac{1}{8} r_2(n)
+
\frac{1}{2}\epsilon(n),
\label{eq:appendix-r2-to-s}
\end{equation}
where
\begin{equation}
\epsilon(n)
=
\begin{cases}
1, & \text{if } n=t^2 \text{ or } n=2t^2
\text{ for some } t\in\mathbb Z_{\geq 1},\\[1mm]
0, & \text{otherwise}.
\end{cases}
\label{eq:appendix-epsilon-def}
\end{equation}

Indeed, a generic representation $0<x<y$ gives eight ordered signed
representations, while the exceptional cases $x=0$ and $x=y$ correspond
respectively to $n=t^2$ and $n=2t^2$.

We recall the standard estimates
\begin{equation}
\sum_{n\le X} r_2(n)=\pi X+o(X),
\label{eq:appendix-r2-first}
\end{equation}
and
\begin{equation}
\sum_{n\le X} r_2(n)^2
=
4X\log X+4\alpha_{\rm BC}X+O(X^{2/3}),
\label{eq:appendix-r2-second}
\end{equation}
following Borwein--Choi~\cite{BorweinChoi2003}.
The second moment \eqref{eq:appendix-r2-second} is the source of the
logarithmic enhancement.
The exceptional contribution in \eqref{eq:appendix-r2-to-s} is of a lower order.
Indeed, from
\begin{align}
r_2(n)=4\sum_{d\mid n}\chi_4(d) \leq 4\sum_{d | n}1 ,
\end{align}
where \(\chi_4\) is the nonprincipal Dirichlet character modulo \(4\), and the standard divisor bound, for every $\varepsilon>0$,
\begin{align}
r_2(t^2)\ll_\varepsilon t^\varepsilon,
\qquad
r_2(2t^2)=r_2(t^2)\ll_\varepsilon t^\varepsilon.
\end{align}
Consequently,
\begin{align}
\sum_{t\le \sqrt X} r_2(t^2)
+
\sum_{t\le \sqrt{X/2}} r_2(2t^2)
\ll_\varepsilon X^{1/2+\varepsilon}
=o(X)
\end{align}
after choosing $\varepsilon<1/2$.

It follows from \eqref{eq:appendix-r2-to-s}--\eqref{eq:appendix-r2-second} that
\begin{align}
\sum_{n\le X}s(n)
=
\frac{\pi}{8}X+o(X),
\end{align}
and
\begin{align}
\sum_{n\le X}s(n)^2
=
\frac{1}{16}X\log X+\frac{\alpha_{\rm BC}}{16}X+o(X).
\end{align}
Therefore
\begin{equation}
\frac12\sum_{n\le X}s(n)\bigl(s(n)-1\bigr)
=
\frac{1}{32}X\log X
+
\left(
\frac{\alpha_{\rm BC}}{32}
-\frac{\pi}{16}
\right)X
+o(X).
\label{eq:appendix-integer-X}
\end{equation}
Taking $X=\lfloor U/2\rfloor$, we obtain
\begin{equation}
C_{\mathbb Z}^{\rm rad}(U)
=
\frac{1}{64}U\log U
+
\left(
\frac{\alpha_{\rm BC}}{64}
-\frac{\pi}{32}
-\frac{\log 2}{64}
\right)U
+o(U).
\label{eq:appendix-integer-radial}
\end{equation}

\subsection{The Half-integer-centered Class}

We now treat the half-integer-centered class.
For $m\in\mathbb Z+\frac12$, define
\begin{align}
s^{\rm half}(m)
=
\#\bigl\{\{x,y\}:x,y\in\mathbb Z_{\ge0}+\tfrac12,\ x\le y,\ x^2+y^2=m\bigr\}.
\end{align}
It is convenient to write
\begin{align}
m=n-\frac12,
\qquad n\in\mathbb Z_{\ge1}.
\end{align}
Let
\begin{align}
r_2^{\rm half}(m)
=
\#\left\{(x,y)\in\left(\mathbb Z+\frac12\right)^2:x^2+y^2=m\right\},
\end{align}
where order and signs are counted.  After multiplying the coordinates by $2$,
one obtains
\begin{align}
r_2^{\rm half}\!\left(n-\frac12\right)
=
r_2(4n-2).
\end{align}
Since $4n-2=2(2n-1)$ and $r_2(2k)=r_2(k)$ for every positive integer
$k$, this becomes
\begin{equation}
r_2^{\rm half}\!\left(n-\frac12\right)
=
r_2(2n-1).
\label{eq:appendix-half-r2}
\end{equation}

As in the integer-centered case, a generic half-integer representation gives
eight ordered signed representations.  The only exceptional case is the
diagonal $x=y$, which contributes only $O(X^{1/2})$ values up to
$m\le X$.  Thus the same boundary analysis gives
\begin{align}
\sum_{n\le X}
s^{\rm half}\!\left(n-\frac12\right)
=
\frac18\sum_{n\le X}r_2(2n-1)+o(X).
\end{align}
Using $r_2(2k)=r_2(k)$, we have
\begin{align}
\sum_{n\le X} r_2(2n-1)
=
\sum_{n\le 2X-1}r_2(n)-\sum_{n\le X-1}r_2(n),
\end{align}
and hence, by \eqref{eq:appendix-r2-first},
\begin{align}
\sum_{n\le X}
s^{\rm half}\!\left(n-\frac12\right)
=
\frac{\pi}{8}X+o(X).
\end{align}
Similarly,
\begin{align}
\sum_{n\le X} r_2(2n-1)^2
=
\sum_{n\le 2X-1}r_2(n)^2-\sum_{n\le X-1}r_2(n)^2.
\end{align}
Using \eqref{eq:appendix-r2-second}, this gives
\begin{align}
\sum_{n\le X} r_2(2n-1)^2
=
4X\log X+
\bigl(4\alpha_{\rm BC}+8\log 2\bigr)X
+o(X).
\end{align}
The diagonal exceptional contribution is again $o(X)$, since
\begin{align}
\sum_{k\le \sqrt X} r_2((2k-1)^2)\ll_\varepsilon X^{1/2+\varepsilon}.
\end{align}
Therefore
\begin{align}
\sum_{n\le X}
s^{\rm half}\!\left(n-\frac12\right)^2
=
\frac{1}{16}X\log X
+
\frac{\alpha_{\rm BC}+2\log 2}{16}X
+o(X).
\end{align}
Consequently,
\begin{equation}
\frac12
\sum_{n\le X}
s^{\rm half}\!\left(n-\frac12\right)
\left(
s^{\rm half}\!\left(n-\frac12\right)-1
\right)
=
\frac{1}{32}X\log X
+
\left(
\frac{\alpha_{\rm BC}}{32}
-\frac{\pi}{16}
+\frac{\log 2}{16}
\right)X
+o(X).
\label{eq:appendix-half-X}
\end{equation}

Let $C_{\mathbb Z+\frac12}^{\rm rad}(U)$ denote the corresponding
nontrivial centered count in the half-integer-centered class.  Since
$m=n-\frac12\le U/2$, replacing the upper limit by $U/2$ changes the
asymptotic by $o(U)$.  From \eqref{eq:appendix-half-X}, we obtain
\begin{equation}
C_{\mathbb Z+\frac12}^{\rm rad}(U)
=
\frac{1}{64}U\log U
+
\left(
\frac{\alpha_{\rm BC}}{64}
-\frac{\pi}{32}
+\frac{\log 2}{64}
\right)U
+o(U).
\label{eq:appendix-half-radial}
\end{equation}

\subsection{The Radial Centered Count}

Adding \eqref{eq:appendix-integer-radial} and
\eqref{eq:appendix-half-radial}, the $\log 2$-terms in the linear
coefficient cancel.  Hence the full centered radial count satisfies
\begin{equation}
C_{\rm sym}^{\rm rad}(U)
=
C_{\mathbb Z}^{\rm rad}(U)
+
C_{\mathbb Z+\frac12}^{\rm rad}(U)
=
\frac{1}{32}U\log U
+
\left(
\frac{\alpha_{\rm BC}}{32}
-\frac{\pi}{16}
\right)U
+o(U).
\label{eq:num_sym_cen_rad}
\end{equation}
This should be compared with the height-box asymptotic
\begin{align}
C_{\rm sym}(H)
=
\frac{2\log 2}{\pi^2}H^2\log H+O(H^2).
\end{align}
Both counts are governed by the same arithmetic second moment of $r_2(n)$,
but the geometric averaging is different.  The radial cutoff orders
representations directly by the value of $x^2+y^2$, while the height-box
cutoff includes the full square $0\le x,y\le H$, whose outer annular region
contributes the additional factor $\log 2$ in the box constant.

Since the asymptotic expansion gives the subleading term for the symmetric
radial count, we compare \eqref{eq:num_sym_cen_rad} with the numerical
enumeration in Fig.~\ref{fig:SolutionsCounts_sqSum}.  As in
Fig.~\ref{fig:SolutionsCounts}, the blue and orange markers show the
symmetric and non-symmetric centered counts with cutoff $U$.  The gray dashed
curve corresponds to
\begin{equation}
\frac{1}{32}U\log U+
\left(\frac{\alpha_{\rm BC}}{32}-\frac{\pi}{16}\right)U,
\label{eq:radial-fit-curve}
\end{equation}
and gives a good description of the symmetric centered count at the displayed
values of $U$.\footnote{Numerically, the difference between the symmetric
count and \eqref{eq:radial-fit-curve} appears to be $o(U)$ over the computed
range.}

\begin{figure}
    \centering
    \includegraphics[width=0.70\linewidth]{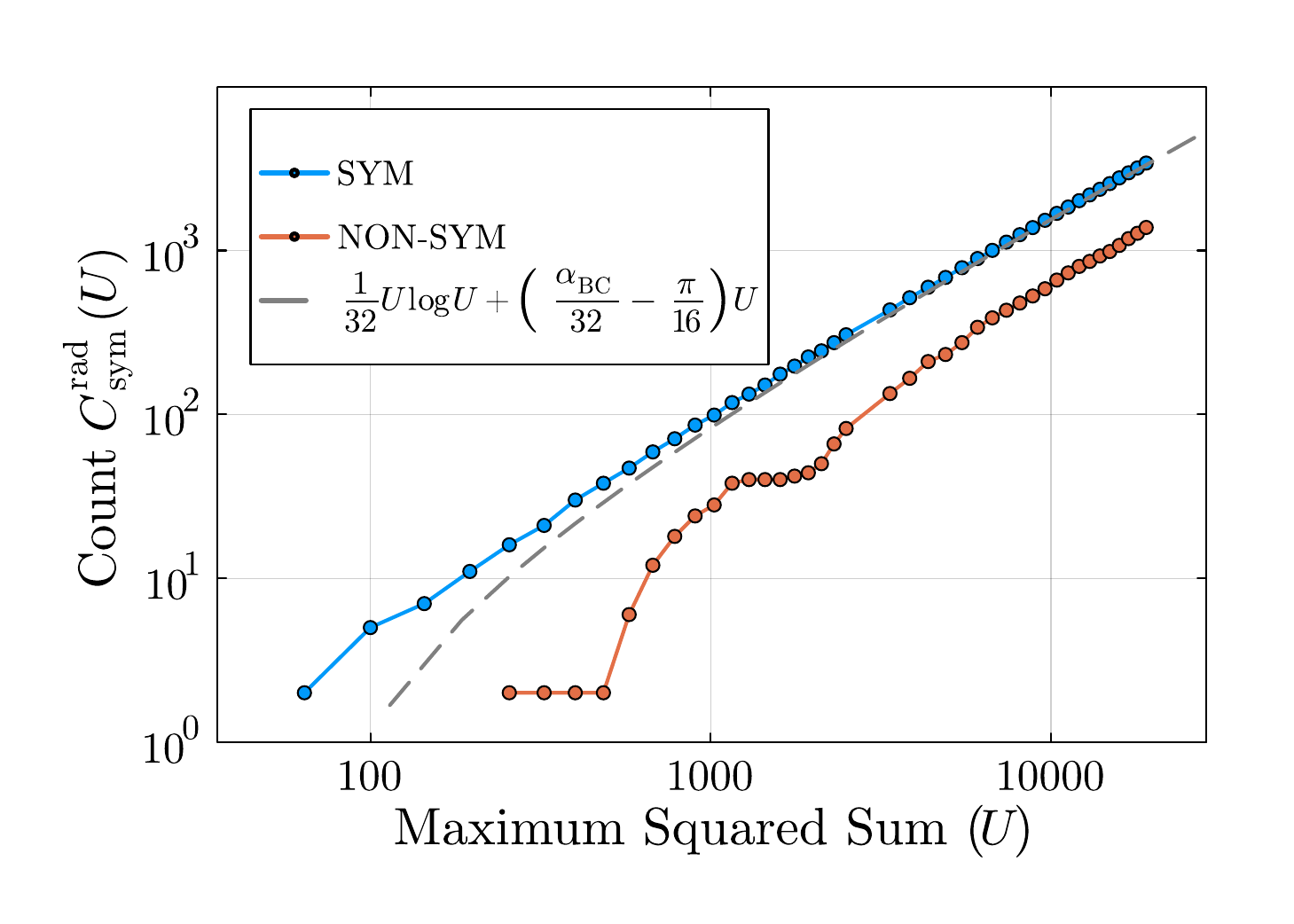}
    \caption{
    Centered nontrivial symmetric solutions ordered by the one-side centered
    squared-sum cutoff $U$.  The dashed curve corresponds to
    \eqref{eq:num_sym_cen_rad}.
    }
    \label{fig:SolutionsCounts_sqSum}
\end{figure}

\FloatBarrier

\newpage
\section*{Emails and Affiliations}

\begingroup

\setlength{\parindent}{0pt}
\setlength{\parskip}{0.8em}

\textbf{Yu-Dai Tsai}\\
\href{mailto:y.tsai@sheffield.ac.uk}{y.tsai@sheffield.ac.uk}\\
\href{mailto:yu-dai.tsai@manchester.ac.uk}{yu-dai.tsai@manchester.ac.uk}\\
\href{mailto:yudaitsai.academic@gmail.com}{yudaitsai.academic@gmail.com}\\
The University of Sheffield, Sheffield S3 7RH, UK\\
The University of Manchester, Manchester M13 9PL, UK\\
Los Alamos National Laboratory (LANL), Los Alamos, NM 87545, USA

\textbf{Junseok Lee}\\
\href{mailto:lee.junseok.p4@dc.tohoku.ac.jp}{lee.junseok.p4@dc.tohoku.ac.jp}\\
Department of Physics, Tohoku University, Sendai, Miyagi 980-8578, Japan

\textbf{Fuminobu Takahashi}\\
\href{mailto:fumi@tohoku.ac.jp}{fumi@tohoku.ac.jp}\\
Department of Physics, Tohoku University, Sendai, Miyagi 980-8578, Japan\\
Kavli IPMU (WPI), UTIAS, University of Tokyo, Kashiwa 277-8583, Japan

\endgroup

\end{document}